\documentclass{lms}
\usepackage[utf8]{inputenc}
\usepackage[american]{babel}
\usepackage[T1]{fontenc}
\usepackage{amssymb,amsmath,amsfonts}
\usepackage{algorithmic}
\usepackage{algorithm}
\usepackage{tikz}
\usepackage{nicefrac}
\usepackage{hyperref}
\usepackage{unicode}
\usepackage{stmaryrd}

\def\snote#1{}

\newtheorem{thm}{Theorem}[section]
\newtheorem{lem}[thm]{Lemma}

\newtheorem{prop}[thm]{Proposition}

\newnumbered{defi}[thm]{Definition}
\newnumbered{rem}{Remark}
\newnumbered{exe}{Example}
\newnumbered{prob}{Problem}

\makeatletter
\newenvironment{localblock}[1]{\@exmplstar{\relax}{#1}}{\@endexample}

\def\mat#1{\begin{pmatrix}#1\end{pmatrix}}
\def\smat#1{{\def\arraystretch{.7}\mat{#1}}}

\def\acco#1{\left\{#1\right\}}
\def\bcro#1{\left\llbracket#1\right\rrbracket}
\def\chev#1{\left\langle#1\right\rangle}
\def\cout#1{\mathsf{#1}}
 
\def\sfdiv{\mathsf{divide}}

\newcommand{\F}{\mathbb{F}}

\newcommand{\tildO}{\tilde{O}}
\newcommand{\MM}{\cout{M}}

\newcommand{\RR}{\cout{R}}
\DeclareMathOperator{\loglog}{loglog}

\makeatletter
\def\full@line{8pt}
\def\doublefull@line{12pt}
\makeatother

\definecolor{purple}{rgb}{.49,.11,.61}
\definecolor{green}{rgb}{.08,.69,.10}
\definecolor{blue}{rgb}{.01,.26,.87}
\definecolor{pink}{rgb}{1,.5,.75}
\definecolor{brown}{rgb}{.39,.21,.00}
\definecolor{red}{rgb}{.89,0,0}
\definecolor{lightblue}{rgb}{.58,.81,.98} 
\definecolor{teal}{rgb}{0,.57,.52}
\definecolor{orange}{rgb}{.97,.45,.02}
\definecolor{lightgreen}{rgb}{.53,.99,.01}
\definecolor{magenta}{rgb}{.76,0,.47}
\hypersetup{colorlinks,linkcolor={red!70!black},
  citecolor={green!70!black},urlcolor={blue!70!black}}

\title[Explicit isogenies in any characteristic]{Explicit isogenies in quadratic time in any characteristic}
\author[L. De Feo, C. Hugounenq, J. Plût, É. Schost]%
  {Luca De Feo, Cyril Hugounenq, Jérôme Plût, and Éric Schost}

\classno{11Y40 (primary), 11G20, 14H52 (secondary)}

\extraline{This work was partially supported by the
  \href{http://www.digiteo.fr/}{DIGITEO} grant 2013-0531D (ARGC) and NSERC.}

\begin{document}
\maketitle

\begin{abstract}
  Consider two ordinary elliptic curves $E,E'$ defined over a finite field
  $\F_q$, and suppose that there exists an isogeny $\psi$ between $E$
  and $E'$.  We propose an algorithm that determines $\psi$ from the
  knowledge of $E$, $E'$ and of its degree $r$, by using the
  structure of the $ℓ$-torsion of the curves (where $ℓ$~is a prime
  different from the characteristic~$p$ of the base field).  

  Our approach is inspired by a previous algorithm due to Couveignes,
  that involved computations using the $p$-torsion on the curves. 
  The most refined version of that algorithm, due to De Feo, 
  has a complexity of~$\tildO(r^2) p^{O(1)}$ base field operations.
  On the other hand, the cost of our algorithm is $\tildO(r^2) \log(q)^{O(1)}$,
  for a large class of inputs; this makes it an interesting alternative
  for the medium- and large-characteristic cases.
\end{abstract}





\section{Introduction}
\label{sec:introduction}

Isogenies are non-zero morphisms of elliptic curves, that is,
non-constant rational maps preserving the identity element. They are
also algebraic group morphisms. Isogeny computations play a central
role in the algorithmic theory of elliptic curves. They are notably
used to speed up Schoof's point counting
algorithm~\cite{schoof85,atkin88,schoof95,elkies98}. They are
also widely applied in cryptography, where they are used to speed up
point multiplication~\cite{gallant+lambert+vanstone01,longa+sica14},
to perform cryptanalysis~\cite{mauer+menezes+teske01}, and to
construct new
cryptosystems~\cite{teske06,charles+lauter+goren09,Stol,defeo+jao+plut12,jao+soukharev2014-signatures}.

The \emph{degree} of an isogeny is its degree as a rational map. If an
isogeny has degree $r$, we call it an $r$-isogeny, and we say that two
elliptic curves are $r$-isogenous if there exists an $r$-isogeny
relating them. Accordingly, we say that two field elements $j$ and
$j'$ are $r$-isogenous if there exist $r$-isogenous elliptic curves
$E$ and $E'$ such that $j(E)=j$ and $j(E')=j'$. The
\emph{explicit isogeny} problem has many incarnations. In this paper,
we are interested in the variant defined below.

\begin{localblock}{Explicit isogeny problem} \label{prob:isogeny-problem}
  Given two $j$-invariants $j$ and $j'$, and a positive integer
  $r$, determine if they are $r$-isogenous. In that case, compute
  curves $E$, $E'$ with $j(E)=j$ and $j(E')=j'$, and the
  rational functions defining an $r$-isogeny $ψ:E\to E'$.
\end{localblock}

A good measure of the computational difficulty of the problem is given
by the isogeny degree~$r$. Indeed the output is represented by $O(r)$
base field elements, hence an asymptotically optimal algorithm would
solve the problem using $O(r)$ field operations. Even though the input
size is logarithmic in $r$, by a slight abuse we say that an algorithm
solves the isogeny problem in polynomial time if it does so in the
size of the output. Thanks to V\'elu's formulas~\cite{velu71}, in
particular the version appearing in~\cite[§2.4]{kohel}, we can compute
$ψ$ from the knowledge of the polynomial $h$ vanishing on the
abscissas of the points in $\kerψ$, at the cost of a constant number
of multiplications of polynomials of degree $O(r)$. Given that all
known algorithms to compute $h$ require more than a few polynomial
multiplications, we often say that we have computed $ψ$ whenever we
have computed $h$, and conversely.

This paper focuses on the explicit isogeny problem for \emph{ordinary}
elliptic curves over finite fields. A famous theorem by Tate~\cite{tate1966endomorphisms} states
that two curves are isogenous over a finite field if and only if they
have the same cardinality over that field. The explicit isogeny
problem stated here appears naturally in the Schoof-Elkies-Atkin point
counting algorithm (SEA). There, $E$ is a curve over $\F_q$, whose rational points
we wish to count, and $E'$ is an $r$-isogenous curve, with $r$
a prime of size approximately $\log(q)$. For this reason, the explicit
isogeny problem is customarily solved without prior knowledge of the
cardinality of $E(\F_q)$. We abide by this convention here.

Many algorithms
have been suggested over the years to solve the explicit isogeny
problem. Early algorithms were due to Atkin~\cite{atkin91} and
Charlap, Coley and
Robbins~\cite{charlap1991enumeration}. Elkies'~\cite{elkies98,Bostan}
was the first algorithm targeted to finite fields (of large enough
characteristic). Assuming $r$ is prime, its complexity is dominated by
the computation of the modular polynomial $\Phi_r$, which is an object
of bit size $O(r^3\log(r))$. Later Bröker, Lauter and
Sutherland~\cite{sutherland10:modpol} optimized the modular polynomial
computation in the context of the SEA
algorithm~\cite{sutherland2013evaluation}. Finally Lercier and Sirvent~\cite{lercier+sirvent08,1602.00244}
generalized Elkies' algorithm to work in any characteristic. Despite
these advances, the overall cost of Elkies' algorithm and its
variants is still at least cubic in $r$.

Another line of work to solve the explicit isogeny problem for ordinary curves was
initiated by Couveignes~\cite{couveignes94,couveignes96,couveignes00},
and later improved by De Feo and Schost~\cite{df10,df+schost12}. These
algorithms use an interpolation approach combined with ad-hoc
constructions for towers of finite fields of characteristic $p$. Their
complexity is quasi-quadratic in $r$, but exponential in $\log(p)$,
hence they are only practical for very small characteristic.

In this paper we present a variant of Couveignes' algorithm with
complexity polynomial in $\log(p)$ and quasi-quadratic in $r$. Like the original 
algorithm, it is limited to  isogenies of ordinary curves. Together
with the Lercier-Sirvent algorithm, they are the only polynomial-time
isogeny computation algorithms working in any characteristic, hence
they are especially relevant for counting points in \emph{medium}
characteristic (i.e., counting points over $\F_{p^n}$, when
$n\gg p/\log (p)$).

Note that, although Couveignes-type algorithms do not make use of the
modular polynomial $\Phi_r$, its computation is still necessary in the
context of the SEA algorithm. Thus our new algorithm does not improve
the overall complexity of point counting, though it may provide a 
speed-up in some cases. It
gives, however, an effective algorithm for solving the explicit
isogeny problem, with potential applications in other contexts, e.g.,
cryptography.

\subsection{Notation}

Throughout this paper: $r$~is a positive integer, $p$~an odd prime,
$q$~a power of $p$, and $\mathbb F_q$ is the finite field with
$q$~elements. $E$ ~is an ordinary elliptic curve over~$\mathbb F_q$,
its group of $n$-torsion points is denoted by~$E[n]$, its
$q$-Frobenius endomorphism by~$π$.  The endomorphism ring of $E$ is
denoted by~$\mathcal O$, with~$K = \mathcal O ⊗ ℚ$ the corresponding
number field, $\mathcal O_K$ its maximal order, and $d_K$~the
discriminant of~$\mathcal O_K$.
For a prime~$ℓ$ different from~$p$ and not dividing~$r$,
we denote by~$E[ℓ^k]$ the group of $ℓ^k$-torsion points of~$E$,
$E[ℓ^{∞}] = \varinjlim E[ℓ^k]$ the union of all $E[ℓ^k]$,
and $T_ℓ(E) = \varprojlim E[ℓ^k]$ the $ℓ$-adic Tate module~\cite[III.7]{Sil},
which is free of rank~two over~$ℤ_ℓ$.
The factorization of the characteristic polynomial of~$π$
over~$ℤ_ℓ$ is determined by the Kronecker symbol~$(d_K/ℓ)$.
If $(d_K/ℓ) = +1$ then we also define $λ,μ$ as
the eigenvalues of~$π$ in~$ℤ_ℓ$ and write~$h = v_ℓ(λ - μ)$,
where $v_ℓ$ is the $ℓ$-adic valuation.

We measure all computational complexities in terms of operations in
$\mathbb{F}_q$; the boolean costs associated to the algorithms
presented next are negligible compared to the algebraic costs, and
will be ignored. We use the Landau notation $O(\ )$ to express
asymptotic complexities, and the notation $\tildO(\ )$ to neglect
(poly)logarithmic factors.  We let $\MM(n)$ be a function such that
polynomials in $\F_q[x]$ of degree less than $n$ can be multiplied
using $\MM(n)$ operations in $\F_q$, under the assumptions
of~\cite[Chapter~8.3]{vzGG}. Using FFT multiplication, one can take
$\MM(n)∈ O(n\log(n)\loglog(n))$.

\subsection{Couveignes' algorithm and our contribution}
\label{sec:couv-algor}

Couveignes' isogeny algorithm takes as input two \emph{ordinary}
$j$-invariants $j,j'∈\F_q$, and a positive integer $r$ not
divisible by $p$, and returns, if it exists, an
$r$-isogeny $ψ:E\to E'$, with $j(E)=j$ and $j(E')=j'$. It is based on the observation that the
isogeny $ψ$ must put $E[p^k]$ in bijection with $E'[p^k]$, in a way
that is compatible with their structure as cyclic groups.
It proceeds in three steps:
\begin{enumerate}
\item\label{alg:orig-couveignes:tower} Compute generators $P,P'$ of
  $E[p^k]$ and $E'[p^k]$ respectively, for $k$ large enough;
\item\label{alg:orig-couveignes:interp} Compute the interpolation
  polynomial $L$ sending $x(P)$ to $x(P')$, and the abscissas of
  their scalar multiples accordingly;
\item\label{alg:orig-couveignes:rational} Deduce a rational fraction $g(x)/h(x)$
  that coincides with $L$ at all points of $E[p^k]$, and verify that
  it defines the $x$-component of an isogeny of degree $r$. If it
  does, return it; otherwise, replace $P'$ with a scalar multiple of
  itself and go back to Step~\ref{alg:orig-couveignes:interp}.
\end{enumerate}

For this algorithm to succeed, enough interpolation points are
required. Given that the $x$-component of $ψ$ is defined by $O(r)$
coefficients, we have $p^k∈\Theta(r)$. However, most of
the time, those points are not going to be defined in the base field
$\F_q$, so we must use efficient algorithms to construct and compute
in towers of extensions of finite fields. Indeed, Couveignes and his
successors go at great length in studying the arithmetic of
\emph{Artin-Schreier towers}~\cite{couveignes00,df+schost12}, and the
adaptation of the fast interpolation algorithm to that
setting~\cite{df10}.  Using these highly specialized constructions,
Steps~\ref{alg:orig-couveignes:tower}
and~\ref{alg:orig-couveignes:interp} are both executed in time
$\tildO(p^{k+O(1)})=\tildO(rp^{O(1)})$. However the last step only
succeeds for one pair of torsion points $P,P'$, in general, thus
$O(r)$ trials are expected on average.
Hence, the overall complexity of Couveignes' algorithm is
$\tildO(r^2p^{O(1)})$, i.e., quadratic in $r$, but exponential in
$\log(p)$. Although the exponent of $p$ is relatively small,
Couveignes algorithm quickly becomes impractical as $p$ grows.

In this paper we introduce a variant of Couveignes' algorithm with the
same quadratic complexity in $r$, and \textbf{no exponential
  dependency in $\log(p)$}.

The bottom line of our algorithm is elementary: replace $E[p^k]$ in
the algorithm with $E[ℓ^k]$, for some small prime $ℓ$. However a
naive application of this idea fails to yield a quadratic-time
algorithm. Indeed, in the worst case one has $ℓ^{2k}∈\Theta(r)$, with
$E[ℓ^k]≃(ℤ/ℓ^kℤ)^2$. Hence, two generators $P,Q$ of $E[ℓ^k]$ must
be mapped onto two generators of $E'[ℓ^k]$. This can be done in
$O(ℓ^{4k})$ possible ways, with a best possible cost of $O(\ell^{2k})$
per trial,
thus yielding an
algorithm of complexity $O(ℓ^{6k})=O(r³)$ at best.

To avoid this pitfall, we carefully study
in Section~\ref{sec:isogeny-volcanoes} the structure of
$E[ℓ^k]$, and its relationship with the Frobenius endomorphism $π$.
With that knowledge, we can put some restrictions on the generators $P,Q$,
as explained in Section~\ref{sec:acti-frob-endm},
thus limiting the number of trials to $O(ℓ^{2k})$.
In Section~\ref{sec:interpolation}
we present an interpolation algorithm adapted to the setting of
$ℓ$-adic towers, and in Section~\ref{sec:complete-algorithm} we put
all steps together and analyze the full algorithm. Finally in
Section~\ref{sec:implem} we discuss our implementation and the
performance of the algorithm.


\subsection{Towers of finite fields}
\label{sub:towers}

The algorithms presented next operate on elements defined in finite
extensions of $\F_q$. Specifically, we will work in a \emph{tower} of finite
fields $\F_q=F₀⊂F₁⊂\cdots⊂F_n$, with $\ell$ dividing $\#F_1-1$, $d_1=[F₁:F₀]$
dividing $ℓ-1$, and $[F_{i+1}:F_i]=ℓ$ for any $i>0$. For $ℓ=2$,
we build upon the work of Doliskani and Schost~\cite{DoSc12}, whereas for
general $ℓ$ we use towers of Kummer extensions in a way similar
to~\cite[\S2]{DeDoSc13}.  Both constructions represent elements of
$F_i$ as univariate polynomials with coefficients in $\F_q$, thus
basic arithmetic operations can be performed using modular
polynomial arithmetic over~$\F_q$. While constructing the tower, we also enforce
special relations between the generators of each level, so that moving
elements up and down the tower, and testing membership, can be done at
negligible cost.

We briefly sketch the construction for odd $\ell$. We first look for a
primitive polynomial $P_1∈\F_q[x]$ of degree equal to $[F₁:F₀]$. There
are many probabilistic algorithms to compute $P_1$ in expected time
polynomial in $\ell$ and $\log(q)$; since their cost does not depend
on the height $n$ of the tower, we neglect it (in all that follows, by
{\em expected cost} of an algorithm, we refer to a Las Vegas algorithm,
whose runtime is given in expectation). Then, the image $x₁$ of
$x$ in $F_1=\F_q[x]/P₁(x)$ is an element of multiplicative order
$\#F₁-1$, and in particular it is not a $ℓ$-th power. Hence for any
$i>1$ we define $F_i$ as $\F_q[x]/P_1\bigl(x^{ℓ^{i-1}}\bigr)$, the
computation of the polynomials $P_1\bigl(x^{ℓ^{i-1}}\bigr)$ incurring
no algebraic cost. Using this representation, elements of $F_i$ can be
expressed as elements of a higher level $F_{i+j}$, and reciprocally,
by a simple rearrangement of the coefficients. Another fundamental
operation can be done much more efficiently than in generic finite
fields, as the following generalization of~\cite[\S2.3]{DoSc12} shows.

\begin{lem}\label{lemma:frob-ell}
  Let $F_0⊂\cdots⊂F_n$ be a Kummer tower as defined above, and let
  $a∈F_i$ for some $0≤i≤n$. For any integer $j$, we can compute the
  $(\#F_j)$-th power of $a$ using $O(ℓ^{i-1}\MM(ℓ))$ operations in
  $\F_q$, after a precomputation independent of $a$ of cost
  $O(ℓ\MM(ℓ)\log(q))$.
\end{lem}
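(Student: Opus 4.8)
The plan is to reduce the computation to a permutation‑and‑rescaling of the coefficient vector of $a$ in a suitable basis over the base level $F_1$, using two features of the construction: that $x_i^{ℓ^{i-1}}$ equals a root of $P_1$, say $x_1$, which lies in $F_1$ and has multiplicative order $\#F_1-1$ — so that every power of $x_i$ is a basis monomial $x_i^{\,r}$, $0\le r<ℓ^{i-1}$, times a power of $x_1$ — and that the $(\#F_j)$-power map $\phi$ fixes $F_1$ pointwise, which holds for $j\ge1$ because $\#F_1-1\mid\#F_j-1$ (as $[F_1:F_0]\mid[F_j:F_0]$). In particular $\phi$ is $F_1$-linear and $\phi(x_i)=x_i^{\#F_j}$.

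First I would convert $a$ to its $F_1$-representation $a=\sum_{0\le t<ℓ^{i-1}}a_t\,x_i^{\,t}$ with $a_t\in F_1$, a cost‑free rearrangement of the given $\F_q$-representation since $F_1=\F_q[x_i^{ℓ^{i-1}}]$. Since $\phi$ fixes the $a_t$,
\[
\phi(a)=\sum_{t}a_t\,\phi(x_i)^{\,t}=\sum_{t}a_t\,x_i^{\#F_j\cdot t}.
\]
Because $x_i^{(\#F_1-1)ℓ^{i-1}}=x_1^{\#F_1-1}=1$, each exponent may be reduced modulo $L:=(\#F_1-1)ℓ^{i-1}$ to some $M_t<L$; writing $M_t=ℓ^{i-1}q_t+r_t$ with $0\le r_t<ℓ^{i-1}$ and $0\le q_t<\#F_1-1$ gives $x_i^{M_t}=x_1^{q_t}\,x_i^{\,r_t}$, hence $\phi(a)=\sum_t(a_t\,x_1^{q_t})\,x_i^{\,r_t}$. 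Moreover $r_t=(\#F_j\,t)\bmod ℓ^{i-1}$ and $\gcd(\#F_j,ℓ)=1$, so $t\mapsto r_t$ is a permutation of $\{0,\dots,ℓ^{i-1}-1\}$: computing $\phi(a)$ is just permuting the coefficients $a_t$ and scaling each by the element $x_1^{q_t}\in F_1$. Since $d_1=[F_1:F_0]<ℓ$, a multiplication in $F_1$ costs $O(\MM(ℓ))$ operations in $\F_q$, so this phase is $O(ℓ^{i-1}\MM(ℓ))$; the integers $M_t,q_t,r_t$ come from integer arithmetic on numbers of bit size $O([F_j:F_0]\log q)$, a boolean cost we ignore.

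The one delicate point — and the reason a precomputation is allowed — is to obtain all the twiddles $x_1^{q_t}$, $0\le t<ℓ^{i-1}$, without an exponentiation apiece. The key is that they form an almost geometric progression: from $M_{t+1}=(M_t+D)\bmod L$ with $D:=\#F_j\bmod L$ one checks that $q_{t+1}\equiv q_t+\lfloor D/ℓ^{i-1}\rfloor+\varepsilon_t\pmod{\#F_1-1}$ for a carry $\varepsilon_t\in\{0,1\}$, whence $q_t\equiv t\lfloor D/ℓ^{i-1}\rfloor+\kappa_t\pmod{\#F_1-1}$ with $0\le\kappa_t\le t$, so that $x_1^{q_t}=g^{\,t}\cdot x_1^{\kappa_t}$ with $g:=x_1^{\lfloor D/ℓ^{i-1}\rfloor}\in F_1$. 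I would therefore precompute $g$ by a single exponentiation in $F_1$ — $O(\log(\#F_1-1))=O(d_1\log q)$ multiplications in $F_1$, i.e. $O(ℓ\,\MM(ℓ)\log q)$ operations in $\F_q$, independent of $a$ — and then, in the main phase, build $1,g,g^2,\dots$ and $1,x_1,x_1^2,\dots$ by repeated multiplication ($O(ℓ^{i-1}\MM(ℓ))$ in all) and recover each $x_1^{q_t}$ with one more multiplication. Summing the estimates proves the bound.

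Two boundary cases remain. When $j>i$ (so $\#F_i-1\mid\#F_j-1$) the map $\phi$ is already the identity on $F_i$ and there is nothing to do. When $j=0$ — the $q$-th power — the only change is that $\phi$ fixes $F_1$ pointwise only if $d_1=1$; in general one additionally applies the $q$-Frobenius of $F_1$ (an $\F_q$-linear endomorphism of a space of dimension $d_1<ℓ$, precomputed from $x_1^q$ at cost $O(\MM(ℓ)\log q)$) coordinatewise to the $a_t$. The main fiddly part in a full write‑up is likely to be the bookkeeping: verifying the almost‑geometric recursion for the $q_t$, confirming that reduction mod $L$ keeps $q_t<\#F_1-1$, and checking the normalization of the various exponents — all routine, but easy to get slightly off.
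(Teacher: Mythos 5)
Your proof is correct and takes essentially the same route as the paper's: both decompose $a$ into its $F_1$-coefficients $a_t$, exploit that the $(\#F_j)$-power map fixes $F_1$ pointwise so that one only has to push the basis monomials, and observe that each $\phi(x_i)^t$ is a monomial $x_1^{q_t}x_i^{r_t}$ computable incrementally at one $F_1$-multiplication apiece. The paper phrases this last step as a Horner evaluation of $a(y)$ with $y=x_i^{\#F_j}$ precomputed; you phrase it as a permutation of the coefficient vector together with twiddle factors $x_1^{q_t}$ built from an almost-geometric progression. These are the same algorithm with different bookkeeping (your auxiliary tables $g^t$ and $x_1^{\kappa_t}$ are a slightly more redundant way of maintaining the running monomial $y^t$), and the observation that $t\mapsto r_t$ is a permutation, while true since $\gcd(\#F_j,\ell)=1$, is not actually needed for either the correctness or the cost bound. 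One genuine improvement on your side: you note that $\sum_t a_t\,\phi(x_i)^t$ equals $a^{\#F_j}$ only when $\phi$ fixes the coefficients, which requires $j\ge 1$ or $d_1=1$, and you patch the $j=0$ case by also applying the $q$-power Frobenius of $F_1$ to the $a_t$; the paper's proof tacitly assumes $j\ge 1$ and does not address this boundary case.
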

\begin{proof}
  Without loss of generality, we can assume that $j<i$; otherwise, the
  output is simply $a$ itself.  Let $s=\#F_j$, and let
  $d=[F_i:F_1]=ℓ^{i-1}$. Let $x_i$ be the image of $x$ in
  $F_i=\F_q[x]/P_i(x)$, so that $x_i^d=x_1$.

  The first step, independently of $a$, is to compute
  $y=x_i^s$. Writing $s = ud + r$, with $r<d$, we see that $y$ is
  given by $x₁^{u \bmod \#F₁}x_i^r$. We compute $x₁^{u\bmod\#F₁}$
  using $O(ℓ\MM(ℓ)\log(q))$ operations in $\F_q$, and we keep this
  element as a monomial of $F₁[x_i]$.
  By assumption, $a$ is represented as a polynomial in $x_i$ of degree
  less than $[F_i:F_0]$. We rewrite it as
  $a =a_0 + a_1 x_i + \cdots + a_{d-1} x_i^{d-1}$, with $a_i∈F₁$. This
  is done by a simple rearrangement of the coefficients of $a$.

  Finally, we compute $a(y)$ by a Horner scheme. All powers $y^k$ we
  need are themselves monomials in $F₁[x_i]$, each computed from the
  previous one using $O(\MM(ℓ))$ operations in $\F_q$, for a total of
  $O(ℓ^{i-1}\MM(ℓ))$. Finally the monomials $a_ky^k$ are combined
  together to form a polynomial in $(x_1,x_i)$ of degree less than
  $(d_1,d)$, and then brought to a canonical form in $F_i$ via another
  rearrangement of coefficients.
\end{proof}

Summarizing, the following computations can be performed
in a Kummer tower at the indicated asymptotic costs, all expressed in 
terms of operations in $\F_q$.
\begin{itemize}
\item basic arithmetic operations (addition, multiplication) in $F_i$,
  using $O(\MM(ℓ^i))$ operations;
\item inversion in $F_i$ using $O(\MM(ℓ^i)\log(ℓ^i))$
  operations (when $ℓ=2$, a factor of $i$ can be saved
    here~\cite{DoSc12}, but we will disregard this optimization for
    simplicity.)
\item mapping elements from $F_{i-1}$ to $F_i$ and \emph{vice versa}
  at no arithmetic cost;
\item multiplication and Euclidean division of polynomials of degree
  at most $d$ in $F_i[x]$ using $O(\MM(dℓ^i))$ operations, via
  Kronecker's
  substitution, as already done in e.g.~\cite{vzgathen+shoup92};
\item computing a $(\#F_j)$-th power in $F_i$ using $O(ℓ^{i-1}\MM(ℓ))$
  operations, after a precomputation that uses $O(ℓ\MM(ℓ)\log(q))$
  operations.
\end{itemize}

For one fundamental operation, we only have an efficient algorithm in
the case $ℓ=2$, hence we introduce the following notation:
\begin{itemize}
\item $\RR(i)$ is a bound on the expected cost of finding a root of a polynomial of degree
  $ℓ$ in $F_i[x]$.
\end{itemize}
Note that we allow Las Vegas algorithms here, as no deterministic polynomial
time algorithm is known. For $\ell=2$, Doliskani and Schost show that
$\RR(i)=O(\MM(ℓ^i)\log(ℓ^iq))$. For general $ℓ$, we have
$\RR(i)=O(ℓ^i\MM(ℓ^{i+1})\log(ℓ)\log(ℓq))$ using the variant of the
Cantor-Zassenhaus algorithm described in~\cite[Chapter~14.5]{vzGG}, or
$\RR(i)=O\bigl((ℓ^{i(ω+1)/2}+\MM(ℓ^{i+1}\log (q)))i\log(ℓ)\bigr)$
using~\cite{kaltofen+shoup97}. Here, $\omega$ is such that matrix
multiplication in size $m$ over any ring can be done in $O(m^\omega)$
base ring operations (so we can take $\omega =2.38$ using the 
Coppersmith-Winograd algorithm). In any case, $\RR(i)$ is between 
linear and quadratic in the degree $\ell^{i}$.

\section{The Frobenius and the volcano}
\label{sec:isogeny-volcanoes}

In this section we explore some fundamental properties of ordinary
elliptic curves over finite fields: the structure of their isogeny
classes, its relationship with the rational $ℓ^∞$-torsion points, and
with the Frobenius endomorphism $π$.

\subsection{Isogeny volcanoes}

For an extensive introduction to isogeny volcanoes we refer the
reader to~\cite{sutherland2013isogeny}.  We recall here, without their
proof, two results about $ℓ$-isogenies between ordinary elliptic
curves.

\begin{prop}[{\cite[Proposition~21]{kohel}}] \label{prop:isogeny-asc-desc}
Let~$ϕ: E → E'$ be an $ℓ$-isogeny between ordinary elliptic curves
and~$\mathcal O, \mathcal O'$ be their endomorphism rings.
Then one of the three following cases is true:
\begin{enumerate}
\item $[\mathcal O':\mathcal O] = ℓ$,
in which case we call $ϕ$ \emph{ascending};
\item $[\mathcal O:\mathcal O'] = ℓ$,
in which case we call $ϕ$ \emph{descending};
\item $\mathcal O' = \mathcal O$,
in which case we call $ϕ$ \emph{horizontal}.
\end{enumerate}
\end{prop}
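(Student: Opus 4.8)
The plan is to realise $\mathcal O$ and $\mathcal O'$ as two orders of a single imaginary quadratic field and to see how their conductors can differ when an $ℓ$-isogeny links the two curves.

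First I would set up the common endomorphism algebra. Since $E$ and $E'$ are ordinary, $\operatorname{End}(E)⊗\Q$ and $\operatorname{End}(E')⊗\Q$ are imaginary quadratic fields, and the map $α\mapsto ℓ^{-1}ϕα\hat{ϕ}$ --- which is meaningful in $\operatorname{End}(E)⊗\Q$ because $\hat{ϕ}ϕ=ϕ\hat{ϕ}=[ℓ]$ --- is a ring isomorphism onto $\operatorname{End}(E')⊗\Q$. Because $ϕ$ is defined over $\F_q$ it commutes with the $q$-Frobenius $π$, so this isomorphism sends $π\in\operatorname{End}(E)$ to $π\in\operatorname{End}(E')$; as $π\notin\Z$ in the ordinary case, this pins down a canonical identification of both algebras with one field $K$, with maximal order $\mathcal O_K$, under which the isomorphism induced by $ϕ$ is the identity of $K$. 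From now on $\mathcal O=\operatorname{End}(E)$ and $\mathcal O'=\operatorname{End}(E')$ are two orders of $K$.

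Next I would read off the two inclusions that an $ℓ$-isogeny imposes. For $α\in\mathcal O$ the endomorphism $ϕα\hat{ϕ}$ of $E'$ lies in $\mathcal O'$ and equals $ℓα$ in $K$ by the previous step, so $ℓ\mathcal O\subseteq\mathcal O'$; exchanging the roles of $E,E'$ and of $ϕ,\hat{ϕ}$ yields $ℓ\mathcal O'\subseteq\mathcal O$ as well. Writing $\mathcal O=\Z+f\mathcal O_K$ and $\mathcal O'=\Z+f'\mathcal O_K$ for the conductors $f,f'$ and fixing a $\Z$-generator $ω$ of $\mathcal O_K$, the inclusion $ℓ\mathcal O\subseteq\mathcal O'$ amounts to $ℓfω\in\Z+\Z f'ω$, i.e. $f'\mid ℓf$, and symmetrically $f\mid ℓf'$. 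Comparing prime-to-$ℓ$ parts forces $f$ and $f'$ to have the same prime-to-$ℓ$ part, and comparing $v_ℓ$ forces $|v_ℓ(f)-v_ℓ(f')|\le 1$. Since among orders of $K$ one has $\mathcal O\subseteq\mathcal O'$ exactly when $f'\mid f$, with index $[\mathcal O':\mathcal O]=f/f'$, the surviving possibilities are $f=f'$ (whence $\mathcal O=\mathcal O'$), $f=ℓf'$ (whence $[\mathcal O':\mathcal O]=ℓ$) and $f'=ℓf$ (whence $[\mathcal O:\mathcal O']=ℓ$), which are exactly the horizontal, ascending and descending cases.

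I do not expect a real obstacle, as the statement is classical; the care is concentrated in the first step --- checking that $α\mapsto ℓ^{-1}ϕα\hat{ϕ}$ is a well-defined ring isomorphism and that following $π$ genuinely produces a single field $K$ on which the map induced by $ϕ$ is the identity --- and in the order/conductor dictionary used at the end. I would also note that nothing in the argument uses separability of $ϕ$, so it applies for every prime $ℓ$, in particular for the $ℓ\ne p$ relevant to the rest of the paper.
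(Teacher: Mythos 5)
Your proof is correct. The paper itself states this result without proof, citing Kohel's thesis, and your argument — identifying both $\Q$-algebras with a single field $K$ via $α\mapsto ℓ^{-1}ϕα\hatϕ$, deriving the mutual inclusions $ℓ\mathcal O\subseteq\mathcal O'$ and $ℓ\mathcal O'\subseteq\mathcal O$, and then reading off the three possibilities from the conductor dictionary — is essentially Kohel's original argument and is carried out correctly, including the observation that the Frobenius pins down the identification.
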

\begin{prop}[{\cite[Proposition~23]{kohel}; \cite[Lemma~6]{sutherland2013isogeny}}] \label{prop:isogeny-count}
Let~$E$ be an ordinary elliptic curve with endomorphism ring~$\mathcal O$.
\begin{enumerate}
\item If $\mathcal O$~is $ℓ$-maximal then
there are $(d_K/ℓ)+1$~horizontal $ℓ$-isogenies from~$E$
(and no ascending $ℓ$-isogenies).
\item If $\mathcal O$~is not $ℓ$-maximal then
there are no horizontal $ℓ$-isogenies from~$E$,
and one ascending $ℓ$-isogeny.
\end{enumerate}
\end{prop}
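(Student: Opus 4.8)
The plan is to reduce the statement to an explicit computation with $\Z_\ell$-lattices, and then run a case analysis driven by the Kronecker symbol $(d_K/\ell)$ in the $\ell$-maximal case and by $v_\ell(f)$ — with $f$ the conductor of $\mathcal O$ — in the non-maximal case. First I would rephrase everything $\ell$-adically. The $\ell$-isogenies from $E$ are in bijection with the $\ell+1$ order-$\ell$ subgroups of $E[\ell]$, hence with the $\ell+1$ sublattices $M$ of $T:=T_\ell(E)$ satisfying $\ell T\subseteq M\subseteq T$ and $[T:M]=\ell$; if $\phi\colon E\to E'$ is the isogeny whose kernel corresponds to $M$, then $\mathrm{End}(E')\otimes\Z_p=\mathcal O\otimes\Z_p$ for every prime $p\neq\ell$ (the isogeny is an isomorphism on $p$-adic Tate modules), while $\mathrm{End}(E')\otimes\Z_\ell=(M:M):=\{\alpha\in K\otimes\Q_\ell:\alpha M\subseteq M\}$. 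Writing $\mathcal O_\ell:=\mathcal O\otimes\Z_\ell$, one has $(T:T)=\mathcal O_\ell$ since $\mathrm{End}(E)=\mathcal O$; moreover $\mathcal O_\ell$ is a semilocal $\Z_\ell$-algebra whose reduction modulo $\ell$ is $\F_\ell\times\F_\ell$, $\F_{\ell^2}$, or $\F_\ell[t]/(t^2)$, so the proper $\mathcal O_\ell$-ideal $T$, being invertible over a semilocal ring, is free of rank one; we may thus assume $T=\mathcal O_\ell$. The sublattices $M$ are then the preimages of the $\ell+1$ lines of the two-dimensional $\F_\ell$-vector space $\mathcal O_\ell/\ell\mathcal O_\ell$, and by Proposition~\ref{prop:isogeny-asc-desc} it suffices, for each such $M$, to locate $(M:M)$ among three options: $\mathcal O_\ell$ itself, an over-order of index $\ell$, or a suborder of index $\ell$.

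For part (1) one has $\mathcal O_\ell=\mathcal O_K\otimes\Z_\ell$, and I would go through its three possible shapes. If $(d_K/\ell)=+1$ it is $\Z_\ell\times\Z_\ell$ and exactly the two coordinate lines — the two primes above $\ell$ — are $\mathcal O_\ell$-stable, the remaining $\ell-1$ are not. If $(d_K/\ell)=-1$ it is the unramified quadratic discrete valuation ring, whose proper ideals have index a power of $\ell^2$, so none of the $\ell+1$ lines is $\mathcal O_\ell$-stable. If $(d_K/\ell)=0$ it is a ramified discrete valuation ring whose maximal ideal $\mathfrak p$ has $[\mathcal O_\ell:\mathfrak p]=\ell$, so exactly one line — the one through $\overline{\mathfrak p}$ — is $\mathcal O_\ell$-stable. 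By Proposition~\ref{prop:isogeny-asc-desc}, a line being $\mathcal O_\ell$-stable is exactly the associated isogeny being horizontal, so the number of horizontal $\ell$-isogenies is $2$, $0$, $1$ respectively, that is, $(d_K/\ell)+1$; and since a maximal order has no proper over-order, none of the $\ell+1$ isogenies is ascending. (Alternatively, one may count horizontal $\ell$-isogenies as invertible $\mathcal O$-ideals of norm $\ell$, that is, as degree-one primes of $\mathcal O$ above $\ell$, once more $(d_K/\ell)+1$ of them.)

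For part (2), write $\mathcal O_\ell=\Z_\ell+\ell^{v}(\mathcal O_K\otimes\Z_\ell)$ with $v=v_\ell(f)\geq 1$; then $\mathcal O_\ell/\ell\mathcal O_\ell\cong\F_\ell[t]/(t^2)$, whose unique nonzero proper ideal is the radical — a single line $L_0$. The sublattice $M_0$ lying above $L_0$ works out to $\ell\bigl(\Z_\ell+\ell^{v-1}(\mathcal O_K\otimes\Z_\ell)\bigr)$, homothetic to the order $\Z_\ell+\ell^{v-1}(\mathcal O_K\otimes\Z_\ell)$, which contains $\mathcal O_\ell$ with index $\ell$: this is the ascending isogeny. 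For each of the other $\ell$ lines, a direct computation of $(M:M)$ from the conditions $\alpha M\subseteq M$ on a $\Z_\ell$-basis of $M$ — using the relation $\eta^{2}\in\ell^{v}\mathcal O_\ell+\ell^{2v}\Z_\ell$ for a generator $\eta=\ell^{v}\theta$ of $\mathcal O_\ell$ over $\Z_\ell$, where $\mathcal O_K\otimes\Z_\ell=\Z_\ell[\theta]$ — yields the suborder $\Z_\ell+\ell^{v+1}(\mathcal O_K\otimes\Z_\ell)$, of index $\ell$ in $\mathcal O_\ell$, so all of these isogenies are descending. Hence none of the $\ell+1$ isogenies is horizontal and exactly one is ascending, which is part (2).

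The only real obstacle I foresee is organizational: once the reduction of the first paragraph is in place, the rest is a finite, explicit determination of the multiplier rings of the $\ell+1$ lattices inside a two-dimensional module over a small local $\Z_\ell$-algebra, and the work lies in keeping the cases — indexed by $(d_K/\ell)$ and by $v_\ell(f)$ — tidy. The one step that is conceptual rather than computational is the justification that $\mathrm{End}(E')$ is pinned down by its localization at $\ell$ together with the fact that it is unchanged away from $\ell$, so that the global statement really does follow from the $\ell$-adic picture; invoking Proposition~\ref{prop:isogeny-asc-desc}, so that every local comparison reduces to a choice among three options, is what keeps this last point short.
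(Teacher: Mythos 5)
The paper itself does not prove this proposition; it is stated with a citation to Kohel's thesis and to Sutherland's survey, and the text explicitly says the proofs are omitted. So there is no in-paper argument to compare against, only the cited sources. Your proof is correct, and it is in substance the standard argument appearing in those references: identify the $\ell+1$ degree-$\ell$ isogenies out of $E$ with the $\ell+1$ index-$\ell$ sublattices $M$ of $T_\ell(E)$ containing $\ell T_\ell(E)$, use Tate's isogeny theorem to identify $\mathrm{End}(E')\otimes\Z_\ell$ with the multiplier ring $(M:M)$ (and likewise $(T:T)=\mathcal O\otimes\Z_\ell$), reduce to $T\cong\mathcal O_\ell$ free of rank one, and then run the case analysis on the three possible shapes of $\mathcal O_\ell/\ell\mathcal O_\ell$, namely $\F_\ell\times\F_\ell$, $\F_{\ell^2}$, $\F_\ell[t]/(t^2)$, counting $\mathcal O_\ell$-stable lines and computing multiplier rings. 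The bijection between lines $M/\ell T$ and isogeny kernels, and the equality $(M:M)=\mathrm{End}(E')\otimes\Z_\ell$, both check out: if $\phi$ has kernel $C=M/\ell T$, then $\hat\phi_*$ embeds $T_\ell(E')$ onto $M$, so $(M:M)$ is indeed the $\ell$-part of the target's endomorphism ring.

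Two minor points worth tightening. First, you should cite Tate's theorem explicitly; it is the one genuinely arithmetic input and you use it twice (once to get $(T:T)=\mathcal O_\ell$ from $\mathrm{End}(E)=\mathcal O$, and again to read off $\mathrm{End}(E')\otimes\Z_\ell$ from $T_\ell(E')$), whereas the rest is linear algebra over $\Z_\ell$. Second, the clause "the proper $\mathcal O_\ell$-ideal $T$, being invertible over a semilocal ring, is free" compresses two separate facts: that for a quadratic order (which is Gorenstein of dimension one) a proper fractional ideal is automatically invertible, and that an invertible module over a semilocal ring is free; neither is hard, but as written the reader may not see where "proper $\Rightarrow$ invertible" comes from. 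Finally, the explicit computation of $(M:M)=\Z_\ell+\ell^{v+1}(\mathcal O_K\otimes\Z_\ell)$ for the $\ell$ non-radical lines in part~(2) is unnecessary: once you know the line is not $\mathcal O_\ell$-stable, so that $(M:M)$ is a proper suborder, Proposition~\ref{prop:isogeny-asc-desc} already forces the index to be $\ell$ and the isogeny to be descending.
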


A \emph{volcano of $ℓ$-isogenies} is a connected component
of the graph of rational $ℓ$-isogenies between curves defined on~$\mathbb F_q$.
The \emph{crater} is the subgraph corresponding to curves
having an $ℓ$-maximal endomorphism ring.
The shape of the crater is given by the Kronecker symbol~$(d_K/ℓ)$,
as per Proposition~\ref{prop:isogeny-count}.
For any~$k ≥ 0$, an $ℓ^k$-isogeny is \emph{horizontal}
if it is the composite of $k$~horizontal $ℓ$-isogenies.
The \emph{depth} of a curve is its distance from the crater.
It is also the $ℓ$-adic valuation of the conductor
of~$\mathcal O = \mathrm{End}(E)$.

		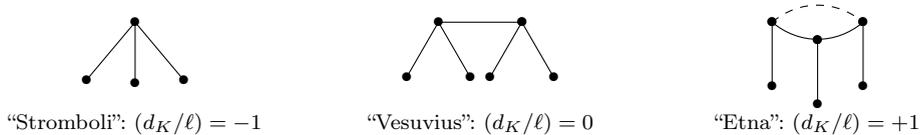
\begin{figure}[h]
		\begin{center}
        \begin{tikzpicture}[scale=0.2]
        \coordinate (A) at (0,0);
		\coordinate (B) at (230:5);
		\coordinate (C) at (270:4);
		\coordinate (D) at (310:5);
		\draw (A) node{$\bullet$};
		\draw (B) node{$\bullet$};
		\draw (C) node{$\bullet$};
		\draw (D) node{$\bullet$};
		\node at (0,-6.7) {``Stromboli'': $(d_K/ℓ) = -1$};
		\draw (A)--(B);
		\draw (A)--(C);
		\draw (A)--(D);
		
		\begin{scope}[xshift=20cm]
		\coordinate (A) at (0,0);
		\coordinate (B) at (5.5,0);
		\coordinate (C) at (240:4.2);
		\coordinate (D) at (300:4.2);
		\draw (A) node{$\bullet$};
		\draw (B) node{$\bullet$};
		\draw (C) node{$\bullet$};
		\draw (D) node{$\bullet$};
		\node at (2.8,-6.7) {``Vesuvius'': $(d_K/ℓ) = 0$};
		\draw (A)--(B);
		\draw (A)--(C);
		\draw (A)--(D);
		\end{scope}
		
		\begin{scope}[xshift=25.5cm]
		\coordinate (A) at (0,0);
		\coordinate (C) at (240:4.2);
		\coordinate (D) at (300:4.2);
		\draw (A) node{$\bullet$};
		\draw (C) node{$\bullet$};
		\draw (D) node{$\bullet$};
		\draw (A)--(C);
		\draw (A)--(D);
		\end{scope}
		
		\begin{scope}[xshift=45cm]
		\node (A) at (-3,0) {$\bullet$};
		\node (B) at (3,0) {$\bullet$};
		\node (C) at (270:1.2) {$\bullet$};
		\node (D) at (90:1.5) {};
		\node at (0,-6.7) {``Etna'': $(d_K/ℓ) = +1$};
		\draw[-,dashed] (A.center) to[bend left=40] (B.center);
		\draw[-] (A.center) to[bend right=40] (B.center);
			\begin{scope}[xshift=-3cm]
			\coordinate (A) at (0,0);
			\coordinate (C) at (270:4.2);
			\draw (A) node{$\bullet$};
			\draw (C) node{$\bullet$};
			\draw (A)--(C);
			\end{scope}
			\begin{scope}[xshift=3cm]
			\coordinate (A) at (0,0);
			\coordinate (C) at (270:4.2);
			\draw (A) node{$\bullet$};
			\draw (C) node{$\bullet$};
			\draw (A)--(C);
			\end{scope}
			\begin{scope}[yshift=-1.2cm]
			\coordinate (A) at (0,0);
			\coordinate (C) at (270:4.2);
			\draw (A) node{$\bullet$};
			\draw (C) node{$\bullet$};
			\draw (A)--(C);
			\end{scope}
		\end{scope}
		\end{tikzpicture}
		\caption{The three shapes of volcanoes of $2$-isogenies }
		\end{center} 
		\end{figure}
\kern -2ex
\subsection{The $ℓ$-adic Frobenius}

In the rest of this paper we consider only a volcano with a cyclic
crater (i.e. we assume $(d_K/\ell) = +1$),
so that $ℓ$~is an Elkies prime for these curves.
This implies that the Frobenius automorphism on~$T_ℓ(E)$,
which we write~$π|T_ℓ(E)$, has two distinct eigenvalues~$λ ≠ μ$.
The depth of the volcano of $\F_q$-rational $ℓ$-isogenies
is~$h = v_ℓ(λ-μ)$~\cite[Theorem 7(iv)]{sutherland2013isogeny}.

\begin{prop}\label{prop:matrice-frobenius}
Let~$E$ be an ordinary elliptic curve with Frobenius endomorphism~$π$.
Assume that the characteristic polynomial of~$π$
has two distinct roots~$λ, μ$ in~$ℤ_ℓ$,
so that the $ℓ$-isogeny volcano has a cyclic crater.
Then there exists a unique $e ∈ \llbracket 0, h\rrbracket$
such that $π|T_ℓ(E)$~is conjugate, over~$ℤ_ℓ$,
to the matrix $\left ( \begin{smallmatrix}λ & ℓ^e \\ 0 & μ
\end{smallmatrix}\right )$.
Moreover $e = h$ if~$E$ lies on the crater,
and else $h - e$~is the depth of~$E$ in the volcano.
\end{prop}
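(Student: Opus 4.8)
The plan is to make the action of $\pi$ on the Tate module $T:=T_\ell(E)$ completely explicit and then read off the endomorphism ring. Work inside $V:=T\otimes_{\mathbb{Z}_\ell}\mathbb{Q}_\ell$, on which $\pi$ has the two distinct eigenlines $V_\lambda,V_\mu$; set $L_\lambda:=T\cap V_\lambda$ and $L_\mu:=T\cap V_\mu$, two saturated rank-one $\mathbb{Z}_\ell$-submodules of $T$ depending only on $(T,\pi)$. For existence, since $L_\lambda$ is saturated, $T/L_\lambda$ is free of rank one; picking a generator $v$ of $L_\lambda$ and a lift $u\in T$ of a generator of $T/L_\lambda$ gives a $\mathbb{Z}_\ell$-basis $(v,u)$ of $T$ in which $\pi$ has matrix $\left(\begin{smallmatrix}\lambda & b\\ 0 & \mu\end{smallmatrix}\right)$ for some $b\in\mathbb{Z}_\ell$ (the lower-left entry vanishes because $\mathbb{Z}_\ell v=L_\lambda$ is $\pi$-stable with $\pi v=\lambda v$, and the trace forces the lower-right entry to be $\mu$). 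A first basis vector making $\pi$ upper triangular with $\lambda$ on top must be an eigenvector in $T$ generating the saturated line $L_\lambda$, hence a unit multiple of $v$, so the admissible changes of such a basis are exactly $v\mapsto\gamma v$, $u\mapsto\alpha u+\beta v$ with $\gamma,\alpha\in\mathbb{Z}_\ell^\times$ and $\beta\in\mathbb{Z}_\ell$; I would check these replace $b$ by $\gamma^{-1}(\alpha b+\beta(\lambda-\mu))$. Since $v_\ell(\lambda-\mu)=h$, the integer $e:=\min(v_\ell(b),h)\in\{0,\dots,h\}$ is an invariant and $b$ can be normalized to $\ell^{e}$, giving the normal form. (The borderline case $v_\ell(b)\ge h$ is where $\pi$ is diagonalizable over $\mathbb{Z}_\ell$; one checks $\left(\begin{smallmatrix}\lambda & \ell^h\\ 0 & \mu\end{smallmatrix}\right)$ is still $\mathbb{Z}_\ell$-conjugate to $\mathrm{diag}(\lambda,\mu)$, so everything is consistent with $e=h$.)

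For uniqueness I would pin down $e$ by an intrinsic quantity. In any basis $(v,u)$ realizing the normal form (so that $v$ necessarily generates $L_\lambda$), the vector $x_0:=-v+\tfrac{\lambda-\mu}{\ell^{e}}u$ lies in $T$ — its $u$-coordinate has valuation $h-e\ge 0$ — is a $\mu$-eigenvector, and is primitive because its $v$-coordinate is a unit; hence $L_\mu=\mathbb{Z}_\ell x_0$ and $L_\lambda\oplus L_\mu=\mathbb{Z}_\ell v+\ell^{h-e}\mathbb{Z}_\ell u$, of index $\ell^{h-e}$ in $T$. Since $L_\lambda\oplus L_\mu$ is intrinsic to $(T,\pi)$, the exponent $e=h-v_\ell([T:L_\lambda\oplus L_\mu])$ is forced, so any two normal forms for $\pi$ share the same $e$.

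For the last assertion I would invoke the $\ell$-adic part of Tate's isogeny theorem for ordinary elliptic curves over finite fields~\cite{tate1966endomorphisms}: $\mathrm{End}(E)\otimes\mathbb{Z}_\ell$ is the commutant of $\pi$ in $\mathrm{End}_{\mathbb{Z}_\ell}(T)\cong M_2(\mathbb{Z}_\ell)$. A direct computation in a normal-form basis shows that a matrix commutes with $\left(\begin{smallmatrix}\lambda & \ell^{e}\\ 0 & \mu\end{smallmatrix}\right)$ iff it has the shape $\left(\begin{smallmatrix}a & b\\ 0 & d\end{smallmatrix}\right)$ with $(\lambda-\mu)b=\ell^{e}(a-d)$, i.e. (for $b$ to be integral) with $a\equiv d\pmod{\ell^{h-e}}$; sending such a matrix to $(a,d)$ identifies $\mathrm{End}(E)\otimes\mathbb{Z}_\ell$ with the ring $\{(a,d)\in\mathbb{Z}_\ell\times\mathbb{Z}_\ell:\ a\equiv d\bmod \ell^{h-e}\}$. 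As $(d_K/\ell)=+1$, the prime $\ell$ splits in $\mathcal{O}_K$, $\mathcal{O}_K\otimes\mathbb{Z}_\ell\cong\mathbb{Z}_\ell\times\mathbb{Z}_\ell$, and under this isomorphism the ring just described is the order of conductor $\ell^{h-e}$. Hence the $\ell$-adic valuation of the conductor of $\mathrm{End}(E)$ equals $h-e$, which is the depth of $E$ by the description recalled in~\S\ref{sec:isogeny-volcanoes}; so $e=h$ exactly when $E$ lies on the crater, and $h-e$ is its depth otherwise.

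The argument is elementary throughout; the delicate part is the $\ell$-adic bookkeeping — the change-of-basis formula for $b$, the borderline case $v_\ell(b)\ge h$, and the index and commutant computations — together with citing the correct form of Tate's theorem. I expect the main obstacle to be the commutant computation and its recognition as the order of conductor $\ell^{h-e}$, since that is the step that actually ties $\pi|T_\ell(E)$ to $\mathrm{End}(E)$, and hence to the volcano.
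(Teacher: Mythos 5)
Your proof is correct and follows the same overall blueprint as the paper's: triangularize $\pi$ on $T_\ell(E)$ using the splitting of the characteristic polynomial, show the upper-right entry can be normalized to $\ell^e$ and that $e$ is an invariant, then invoke Tate's theorem to identify $\mathrm{End}(E)\otimes\mathbb Z_\ell$ with the $\pi$-commutant in $M_2(\mathbb Z_\ell)$ and read off the conductor. The main difference is in the uniqueness step. The paper argues directly from the change-of-variable formula: conjugating $\left(\begin{smallmatrix}\lambda & a\\0 & \mu\end{smallmatrix}\right)$ by $\left(\begin{smallmatrix}1 & b\\0 & 1\end{smallmatrix}\right)$ sends $a\mapsto a-b(\lambda-\mu)$ and by $\left(\begin{smallmatrix}c&0\\0&1\end{smallmatrix}\right)$ sends $a\mapsto ca$, and then asserts that $v_\ell(a)$ is a conjugacy invariant. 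As you correctly note, this is only literally true for $v_\ell(a)<h$; the honest invariant is $\min(v_\ell(a),h)$, a point the paper addresses only in the remark preceding the proof (that $e=h$ and the diagonal form are conjugate). Your alternative route — pin down $e$ intrinsically by $e = h - v_\ell\bigl([T:L_\lambda\oplus L_\mu]\bigr)$, with $L_\lambda, L_\mu$ the saturated eigenlines — is cleaner and sidesteps that subtlety entirely, at the modest cost of introducing the lattices $L_\lambda,L_\mu$ and exhibiting a primitive $\mu$-eigenvector in the normalized basis. For the last part, your explicit commutant calculation, identifying the image with $\{(a,d):a\equiv d\bmod \ell^{h-e}\}\subset\mathbb Z_\ell\times\mathbb Z_\ell$, is the fully spelled-out version of the paper's terser statement that $\mathcal O\otimes\mathbb Z_\ell$ is generated by the identity and $\ell^{-\min(h,v_\ell(a))}(\pi_\ell-\lambda)$; these are equivalent since $\mathbb Q_\ell[\pi_\ell]$ is the commutant of $\pi_\ell$ in $M_2(\mathbb Q_\ell)$ when the eigenvalues are distinct.
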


We note here that the matrix $\left(\begin{smallmatrix} λ & ℓ^h \\ 0 &
μ \end{smallmatrix}\right)$ is conjugate over~$ℤ_ℓ$
to $\left(\begin{smallmatrix} λ & 0 \\ 0 & μ\end{smallmatrix}\right)$.
\begin{proof}
Since the characteristic polynomial of~$π$ splits over~$ℤ_ℓ$,
the matrix of~$π|T_ℓ(E)$ is trigonalizable.
Conjugating the matrix $\left ( \begin{smallmatrix}λ & a\\0 & μ
\end{smallmatrix}\right )$ by~$\left ( \begin{smallmatrix}1 & b\\0 & 1
\end{smallmatrix} \right )$ replaces~$a$ by~$a - b (λ - μ)$,
and conjugating by~$\left(\begin{smallmatrix} c & 0 \\ 0 &
1\end{smallmatrix}\right)$ replaces~$a$ by~$c · a$,
so that the valuation~$e = v_ℓ(a)$ is an invariant under matrix conjugation.
This proves the first part.
For the second part, by Tate's theorem~\cite[Isogeny theorem 7.7 (a)]{Sil},
$\mathcal O ⊗ ℤ_ℓ$~is isomorphic to the order in~$ℚ_ℓ[π_ℓ]$
of matrices with integer coefficients,
which is generated by the identity and~$ℓ^{-\min (h, v_ℓ(a))} (π_ℓ-λ)$.
\end{proof}

We now study the action of $ℓ$-isogenies on the $ℓ$-adic Frobenius by
showing the link between two related notions of diagonalization.

\begin{defi}[(Horizontal and diagonal bases)]
  Let~$E$ be a curve lying on the crater. We call a point of~$E[ℓ^k]$
  \emph{horizontal} if it generates the kernel of a horizontal
  $ℓ^k$-isogeny.  We call a basis of~$E[ℓ^k]$ \emph{diagonal} if
  $π$~is diagonal in it, \emph{horizontal} if both basis points are
  horizontal.
\end{defi}

\begin{prop} \label{prop:diagonal-horizontal}
Let~$E$ be a curve lying on the crater and~$P$ be a point of~$E[ℓ^k]$
such that $ℓ^h P$~is an eigenvector of~$π$.
Then $ℓ^h P$~is horizontal if, and only if, $P$~is an eigenvector for~$π$.
If $π(P) = λ P$ then we say that $ℓ^h P$~has direction~$λ$.
\end{prop}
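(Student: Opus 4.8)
The plan is to work over $\mathbb{Z}_\ell$ with the Tate module $T_\ell(E)$, translating the statement about horizontality into a statement about sub-$\mathbb{Z}_\ell$-modules stable under $\pi$ and their relation to orders. First I would recall, using Proposition~\ref{prop:matrice-frobenius} and its proof, that since $E$ lies on the crater $\pi|T_\ell(E)$ is diagonalizable over $\mathbb{Z}_\ell$; fix a diagonal basis $(U,V)$ with $\pi U = \lambda U$, $\pi V = \mu V$, and note $h = v_\ell(\lambda-\mu)$. A horizontal $\ell^k$-isogeny from $E$ is (by Proposition~\ref{prop:isogeny-count} applied $k$ times along the crater) a cyclic $\ell^k$-isogeny whose kernel lifts to a rank-one direct summand $W$ of $T_\ell(E)/\ell^k T_\ell(E)$ that is not only $\pi$-stable but such that $\mathrm{End}$ of the target is still $\ell$-maximal — equivalently the kernel must be one of the two ``eigendirections'' mod $\ell^k$. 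So a point $Q \in E[\ell^k]$ is horizontal iff $Q$ generates either $\langle U \bmod \ell^k\rangle$ or $\langle V \bmod \ell^k\rangle$, i.e.\ iff $Q$ is (a unit multiple of) one of the two reductions of the eigenvectors.

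Next I would set $Q = \ell^h P$ and write $P = aU + bV$ in the chosen basis (lifting $P$ to $T_\ell(E)$; since $P$ has order dividing $\ell^k$ we really work mod $\ell^k$, but lifting is harmless). Then $Q = \ell^h a\,U + \ell^h b\,V$. The hypothesis is that $Q = \ell^h P$ is an eigenvector of $\pi$; I must show $Q$ is \emph{horizontal} — i.e.\ $\ell^h a$ or $\ell^h b$ is a unit times a generator of a pure eigendirection, which for a point of the crater means exactly ``$Q$ lies in $\langle U\rangle$ or $\langle V\rangle$ modulo its own order'' — if and only if $P$ itself is an eigenvector, i.e.\ $b=0$ or $a=0$. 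One direction is trivial: if $P = aU$ then $Q = \ell^h a U$ is horizontal (it generates a sub-line of the $\lambda$-eigendirection), and symmetrically. For the converse, compute $\pi Q - \lambda Q = \ell^h b(\mu-\lambda)V$. Since $v_\ell(\mu-\lambda) = h$, this is $\ell^{2h} b \cdot (\text{unit}) \cdot V$. If $Q$ is an eigenvector with eigenvalue $\lambda$ then $\pi Q - \lambda Q = 0$ in $E[\ell^k]$, forcing $\ell^{2h} b \equiv 0 \pmod{\ell^k}$; but that only gives $v_\ell(b) \geq k - 2h$, which is weaker than $b=0$. The resolution is that horizontality is a stronger condition than being an eigenvector: I would invoke the order description from the proof of Proposition~\ref{prop:matrice-frobenius}, namely that the conductor (depth) of the target curve is governed by $\min(h, v_\ell(\text{off-diagonal entry}))$ in the matrix of $\pi$ restricted to the target's Tate module; horizontality of $Q$ means this depth is $0$, which forces $P$ (generating the full preimage line up one level, so to speak) to have $v_\ell(b) = 0$ or $v_\ell(a) = 0$ — and combined with $Q = \ell^h P$ being a genuine eigenvector modulo $\ell^k$, this pins down $b = 0$ (resp.\ $a=0$), giving that $P$ is an eigenvector.

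More precisely, the clean way I would organize the converse is: $Q = \ell^h P$ horizontal $\Rightarrow$ $\langle Q \rangle$ is the reduction of an eigenline, say $\langle Q\rangle = \langle \ell^h U\rangle$ in $E[\ell^k]$, so $\ell^h b\,V \in \langle \ell^h U\rangle$, which in the free module forces $v_\ell(\ell^h b) \geq$ (something making $\ell^h b V$ a $\mathbb{Z}_\ell$-multiple of $\ell^h U$), hence $b \equiv 0$; then $P = aU + bV$ with $b=0$ is an eigenvector. The last bookkeeping — keeping track of whether we argue in $T_\ell(E)$ or modulo $\ell^k$ and making sure ``$\ell^h P$ is an eigenvector of $\pi$'' is used with full strength rather than just ``approximately an eigenvector'' — is the main obstacle, and the key input that makes it go through is $v_\ell(\lambda - \mu) = h$ together with the exactness of $\ell^h P$ being an eigenvector (not merely $\ell^k$-close to one). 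I would finish by naming the direction: when $P$ is the eigenvector with $\pi P = \lambda P$, then $\ell^h P$ generates a sub-line of the $\lambda$-eigenspace, so calling it ``direction $\lambda$'' is consistent.
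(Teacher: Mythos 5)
Your plan mirrors the paper's in outline — fix a diagonal basis $(U,V)$ of $T_\ell(E)$, write $P$ in it, and translate horizontality of $\ell^h P$ into a valuation condition on the coordinates — but the central step is asserted rather than proved, and proving it is exactly what the paper's lattice/Hermite-normal-form computation does. Your key claim (first paragraph), that $Q\in E[\ell^k]$ is horizontal if and only if $\langle Q\rangle$ is the reduction of a Tate eigenline $\langle U\rangle$ or $\langle V\rangle$, does \emph{not} follow from Proposition~\ref{prop:isogeny-count} together with $\pi$-stability of the kernel. When $h>0$, $\pi$ acts as a scalar on $E[\ell^j]$ for $j\leq h$, so \emph{every} cyclic subgroup of $E[\ell]$ is $\pi$-stable and gives a rational $\ell$-isogeny, yet only two of those $\ell+1$ isogenies are horizontal; Proposition~\ref{prop:isogeny-count} counts them but does not identify them. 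Picking out the two horizontal ones requires computing the endomorphism ring (equivalently, the depth) of the image curve, which is precisely what the paper does via the lattice $\Lambda_R$ and the matrix $M_R^{-1}\Pi M_R$ of $\pi|T_\ell(E')$: normalizing $P=xR+S$, the off-diagonal entry of $\pi|T_\ell(E')$ is $\ell^{h-k}x(\lambda-\mu)$, and $E'$ lies on the crater iff this matrix is diagonalizable over $\mathbb{Z}_\ell$, iff $v_\ell(x)\geq k-h$ — the same inequality that makes $P$ an eigenvector of $\pi|E[\ell^k]$. You gesture at this machinery ("invoke the order description from the proof of Proposition~\ref{prop:matrice-frobenius}\dots governed by $\min(h,v_\ell(\text{off-diagonal}))$") but do not carry it out, and the conclusion you extract from it ("forces $v_\ell(b)=0$ or $v_\ell(a)=0$") is not what that computation yields. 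Without actually producing the off-diagonal entry and its valuation, "horizontal $\Leftrightarrow$ eigenline reduction" is an assertion of (a mild reformulation of) the proposition, not a proof of it.

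A secondary issue is the bookkeeping in the converse even granting the key claim. From $\langle\ell^h P\rangle=\langle\ell^h U\rangle$ in $E[\ell^k]$ one gets $\ell^h b\,V=0$ in $E[\ell^k]$, i.e.\ $v_\ell(b)\geq k-h$, \emph{not} $b=0$; the coordinate $b$ is only defined modulo $\ell^k$, so $b=0$ would be a strictly stronger (and unneeded) conclusion. The bound $v_\ell(b)\geq k-h$ is already enough to make $P$ an eigenvector modulo $\ell^k$, since then $v_\ell\bigl(b(\lambda-\mu)\bigr)\geq k$, but the intermediate statement "$v_\ell(b)=0$ or $v_\ell(a)=0$" (which only says $P$ has full order $\ell^k$) is incorrect and points in the wrong direction: horizontality forces a coordinate to be \emph{highly divisible} by $\ell$, not a unit.
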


This proposition being trivially true for~$h ≥ k$,
we assume that~$k ≥ h$ in what follows.

Let $R$ be a point of $E$ of order~$ℓ^k$, let $ϕ$ be the isogeny 
with kernel~$\chev{R}$, and let $E'$ be its image. The subgroup~$\chev{R}$ defines a point in
the projective space of~$E[ℓ^k]$,
which is a projective line over~$ℤ/ℓ^k ℤ$.
There exists a canonical bijection~\cite[II.1.1]{SL2} between
this projective line and
the set of lattices of index~$ℓ^k$ in the $ℤ_ℓ$-module $T_ℓ(E)$:
it maps a line~$\chev{R}$ to the lattice~$Λ_R = \chev{R} + ℓ^k T_ℓ(E)$.
This lattice is also the preimage by~$ϕ$
of the lattice~$ℓ^k T_ℓ(E')$.

Fix a basis~$(P, Q)$ of~$E[ℓ^k]$, let $Π$ be the matrix of $π$
in this basis, and let~$R = x P + y Q$.
The lattice~$Λ_R$ is generated by the columns of the matrix
$L_R = \left (\begin{smallmatrix}ℓ^k & 0 & x\\0 & ℓ^k & y\end{smallmatrix} \right )$.
The Hermite normal form of~$L_R$
is 
$M_R = \left (\begin{smallmatrix}ℓ^{k-m} & x/y' \\ 0 & ℓ^m\end{smallmatrix}\right )$,
where we write~$y = ℓ^m y'$ with~$ℓ ∤y'$,
and the columns of $M_R$ also generate the lattice~$Λ_R$.
We check that $M_R$ has determinant~$ℓ^k$.
Since $Λ_R = ϕ_R^{-1} (ℓ^k T_{ℓ} (E'))$,
there exists a basis of~$T_ℓ(E')$
in which $ϕ_R$ has matrix~$ℓ^k M_R^{-1}$.
Therefore, in that basis of~$T_ℓ(E')$,
the matrix of~$π|T_ℓ(E')$ is $M_R^{-1} · Π · M_R^{}$.
\begin{proof}[of Proposition~\ref{prop:diagonal-horizontal}]
Fix a basis~$(R, S)$ of~$E[ℓ^k]$ that diagonalizes~$π$.
We can write $P = x R + y S$;
without loss of generality we may assume $y=1$.
Let~$ϕ$ be the isogeny determined by~$ℓ^h P$, and let~$E'$ be its image.
Since $ℓ^h P$~is an eigenvector of~$π$, $ϕ$~is a rational isogeny.
According to the previous discussion,
$π|T_ℓ(E')$ has matrix~$\left ( \begin{smallmatrix}λ& ℓ^{h-k} x (λ-μ)\\ 0&μ
\end{smallmatrix}\right )$.
This matrix is diagonalizable only if~$v_{ℓ}(x) ≥ k - h$.
On the other hand, we can compute~$(π - μ) P = x (λ - μ) R$,
so that $P$~is an eigenvector on the same condition~$v_{ℓ}(x) ≥ k-h$.
\end{proof}

While horizontal bases are our main interest,
diagonal bases are easier to compute in practice.
Algorithms computing both kind of bases
are given in Section~\ref{sec:acti-frob-endm}.
The main tool for this is the next proposition:
given a horizontal point of order~$ℓ^k$,
it allows us to compute a horizontal point of order~$ℓ^{k+1}$.

\begin{prop}\label{prop:push-horizontal}
Let~$ψ: E → E'$ be a horizontal $ℓ$-isogeny with direction~$λ$.
For any point~$Q ∈ E[ℓ^∞]$,
if $ℓ Q$~is horizontal with direction~$μ$,
then $ψ(Q)$ is horizontal with direction~$μ$.
\end{prop}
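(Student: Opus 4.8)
The plan is to reuse the explicit matrix computation carried out just before the proof of Proposition~\ref{prop:diagonal-horizontal}, now applied to the isogeny $\psi$ and its dual, and to track directions through it. First I would fix $k$ with $\ell^k Q \ne 0$ but $\ell^{k+1}Q = 0$ (the statement is vacuous otherwise), so that $Q \in E[\ell^k]$ is a point whose $\ell$-multiple $\ell Q$ has order $\ell^{k-1}$ and is horizontal with direction $\mu$. Since $E$ lies on the crater and $\psi$ is a horizontal $\ell$-isogeny, $E'$ also lies on the crater; and $\psi(Q)$ is a point of $E'[\ell^k]$ whose $\ell$-multiple is $\psi(\ell Q)$, the image under $\psi$ of a horizontal point. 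I need two things: that $\psi(\ell Q)$ is again horizontal (so that "$\psi(Q)$ horizontal" makes sense at all), and that its direction is $\mu$; and that the direction is genuinely carried up from level $k-1$ to level $k$.

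The key step is the direction bookkeeping on the Tate module. Let $\lambda' = v_\ell$ of the eigenvalue, and pick a diagonal basis $(R,S)$ of $T_\ell(E)$ with $\pi R = \lambda R$, $\pi S = \mu S$; the $\lambda$-eigenline reduces mod $\ell^j$ to the kernel of a horizontal $\ell^j$-isogeny, and similarly for $\mu$. The isogeny $\psi$, having kernel the order-$\ell$ subgroup of the $\lambda$-eigenline (direction $\lambda$), fits into the framework above with $M_\psi$ a $2\times 2$ matrix of determinant $\ell$; one reads off from $M_\psi^{-1}\Pi M_\psi$ that $\psi$ maps the $\mu$-eigenline of $T_\ell(E)$ isomorphically onto the $\mu$-eigenline of $T_\ell(E')$ (the $\lambda$-eigenline gets multiplied by $\ell$, consistent with $\psi$ having direction $\lambda$, i.e. killing the level-$1$ part of that line). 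Concretely: writing $Q = xR + yS$ in $E[\ell^k]$, the hypothesis that $\ell Q$ is horizontal with direction $\mu$ forces, by Proposition~\ref{prop:diagonal-horizontal} applied one level down (or directly by the eigenvector criterion), that $x$ is divisible by $\ell^{k-1}$ — equivalently $\ell Q$ lies on the $\mu$-eigenline mod $\ell^{k-1}$, so $Q$ itself is, up to the order-$\ell$ ambiguity, on the $\mu$-eigenline. Pushing through $\psi$, which is injective on the $\mu$-eigenline and rescales the $\lambda$-component, $\psi(Q)$ lands on the $\mu$-eigenline of $T_\ell(E')$ modulo $\ell^{k-1}$, and its order-$\ell^k$ part is again divisible away from $\ell$ in the $S$-coordinate; hence $\ell^0 \psi(Q)$... more precisely $\psi(Q)$ itself satisfies the horizontality-and-direction-$\mu$ criterion of Proposition~\ref{prop:diagonal-horizontal} on $E'$, which is exactly the claim.

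The main obstacle I anticipate is keeping the valuations straight: there are three valuations in play — $h = v_\ell(\lambda-\mu)$, the level $k$, and the valuation of the $R$-coordinate $x$ of $Q$ — and the statement mixes the "geometric" notion (kernel of a horizontal isogeny) with the "linear-algebra" notion (eigenvector), which Proposition~\ref{prop:diagonal-horizontal} reconciles only at the cost of the shift by $h$. The cleanest route is probably to avoid coordinates almost entirely: show that $\psi$ carries the horizontal $\mu$-direction subgroups of $E[\ell^\infty]$ to those of $E'[\ell^\infty]$ by a dual-isogeny argument — $\hat\psi$ is horizontal with direction $\mu$ by Proposition~\ref{prop:isogeny-asc-desc} and a degree count, so $\hat\psi\psi = [\ell]$ acts as expected on both eigenlines — and then deduce that $\psi(Q)$ generates the same horizontal $\ell^k$-isogeny kernel on $E'$ that $Q$'s $\mu$-direction structure predicts. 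I would present the coordinate version as the backup, since it is a one-line specialization of the matrix computation already in the text.
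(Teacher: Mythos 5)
Your third paragraph captures the paper's proof exactly: set $Q' = \psi(Q)$, use the dual $\widehat\psi$ (which is horizontal with direction $\mu$, since $\psi$ has direction $\lambda$ and $\widehat\psi\psi = [\ell]$), and observe $\widehat\psi(Q') = \ell Q$. Composing $\widehat\psi$ with the horizontal direction-$\mu$ isogeny with kernel $\langle\ell Q\rangle$ gives a horizontal $\ell^k$-isogeny out of $E'$ whose kernel is cyclic (the $\lambda$- and $\mu$-direction subgroups of $E'[\ell]$ are distinct) and contains $Q'$, hence equals $\langle Q'\rangle$; so $Q'$ is horizontal with direction $\mu$. You correctly identify this as the cleanest route; the paper states it in two lines and leaves the cyclicity check and the computation of $\widehat\psi$'s direction to the reader, both of which you supply.

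The coordinate computation you spend most of your words on would also work in principle — $\psi$ does act as $\mathrm{diag}(\ell,1)$ on a compatible pair of diagonal bases of the Tate modules, hence preserves the $\mu$-eigenline and scales the $\lambda$-eigenline — but as written it has gaps. The sentence ``Let $\lambda' = v_\ell$ of the eigenvalue'' is not a statement; if $\ell^{k+1}Q = 0 \ne \ell^k Q$ then $Q$ has order $\ell^{k+1}$, not $\ell^k$; and ``$x$ is divisible by $\ell^{k-1}$'' conflates the eigenvector condition with the horizontality condition, which Proposition~\ref{prop:diagonal-horizontal} relates only after lifting $Q$ to $E[\ell^{k+h}]$ and applying the $\ell^h$ shift, so the exact power of $\ell$ depends on $h$ and on the ambient torsion level. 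These are precisely the valuation subtleties you flag yourself, and they are what the duality argument is designed to sidestep; since you choose to present the duality version as primary, the proposal lands on the paper's approach.
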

\begin{proof}
Let~$Q' = ψ(Q)$ and~$\widehat{ψ}$ be the isogeny dual to~$ψ$.
Since both $\widehat{ψ}$~and~$\widehat{ψ}(Q') = ℓ Q$ are horizontal
with direction~$μ$, $Q'$~is also horizontal.
\end{proof}
\begin{prop}\label{prop:parallel}
Let~$ψ: E → E'$ be an isogeny of degree~$r$ prime to~$ℓ$.
\begin{enumerate}
\item The curves~$E$ and~$E'$ have the same depth
in their $ℓ$-isogeny volcanoes.
\item\label{prop:parallel:func} For any point~$P ∈ E[ℓ^k]$,
the isogenies with kernel $\chev{P}$ and~$\chev{ψ(P)}$ have the same type
(ascending, descending, or horizontal with the same direction).
\item\label{prop:parallel:ascent} If $P ∈ E[ℓ]$ and $P' ∈ E'[ℓ]$ are both ascending,
or both horizontal with the same direction,
then $E/P$ and~$E'/P'$ are again $r$-isogenous.
\end{enumerate}
\end{prop}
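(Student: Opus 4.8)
The plan is to prove the three items in order, using the dual isogeny and the dictionary between subgroups of $E[\ell^k]$ and sublattices of $T_\ell(E)$ established just before Proposition~\ref{prop:diagonal-horizontal}. For item~(1), I would argue that $\psi$ induces an isomorphism $T_\ell(E)\otimes\Q_\ell \xrightarrow{\sim} T_\ell(E')\otimes\Q_\ell$ that carries $\pi|T_\ell(E)$ to $\pi|T_\ell(E')$, so the two Frobenii have the same characteristic polynomial over $\Z_\ell$; in particular they share the eigenvalues $\lambda,\mu$ and hence $h=v_\ell(\lambda-\mu)$ is the same on both sides. By Proposition~\ref{prop:matrice-frobenius} the depth of a curve is the invariant $h-e$, where $\ell^e$ is the valuation of the off-diagonal entry in the normal form of Frobenius. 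Since $\deg\psi=r$ is prime to $\ell$, the map $T_\ell(\psi)$ is an isomorphism of $\Z_\ell$-modules (its determinant $r$ is a unit in $\Z_\ell$), so it conjugates $\pi|T_\ell(E)$ to $\pi|T_\ell(E')$ over $\Z_\ell$; the invariant $e$ is therefore preserved, and so is the depth.

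For item~(2), fix $P\in E[\ell^k]$ and set $P'=\psi(P)$. Because $\psi$ is an isomorphism on $T_\ell$, it sends the line $\langle P\rangle$ in the projective line over $\Z/\ell^k\Z$ to the line $\langle P'\rangle$, compatibly with the lattice correspondence $R\mapsto \Lambda_R=\langle R\rangle+\ell^kT_\ell$. Unwinding the computation displayed before Proposition~\ref{prop:diagonal-horizontal}: if $(R,S)$ is a diagonal basis of $E[\ell^k]$ and $P=xR+yS$, then the Frobenius on the quotient by $\langle \ell^hP\rangle$ (if $\ell^hP$ is an eigenvector, the horizontal case) or on the appropriate quotient in general has the shape $M_P^{-1}\Pi M_P$, whose off-diagonal valuation — i.e.\ the type of the isogeny and, in the horizontal case, its direction — depends only on $v_\ell(x)$, $v_\ell(y)$ and the eigenvalue data. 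Since $\psi_*(R),\psi_*(S)$ is again a diagonal basis of $T_\ell(E')$ with the same eigenvalues and $P'=\psi_*(P)$ has the same coordinates, the type of the isogeny with kernel $\langle P'\rangle$ matches that of $\langle P\rangle$. (One must also invoke Proposition~\ref{prop:diagonal-horizontal} to translate "eigenvector" into "horizontal".)

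For item~(3), suppose $P\in E[\ell]$, $P'\in E'[\ell]$ are both ascending, or both horizontal with the same direction. I want an $r$-isogeny $E/\langle P\rangle \to E'/\langle P'\rangle$. The natural candidate: push $\psi$ through the quotient. By item~(2), the image $\psi(P)$ generates an $\ell$-isogeny of the same type as $P$, hence of the same type as $P'$. In the horizontal-with-direction-$\lambda$ case, there is a unique horizontal $\ell$-isogeny from $E'$ with that direction (by Proposition~\ref{prop:isogeny-count}, the crater being cyclic, directions $\lambda$ and $\mu$ pick out the two craters edges), so $\langle\psi(P)\rangle=\langle P'\rangle$ and $\psi$ descends directly to an $r$-isogeny $E/\langle P\rangle\to E'/\langle P'\rangle$. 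In the ascending case, again there is a \emph{unique} ascending $\ell$-isogeny from $E'$ (Proposition~\ref{prop:isogeny-count}(2)), so $\langle\psi(P)\rangle=\langle P'\rangle$ and the same argument applies: the composite $E\to E'\to E'/\langle P'\rangle$ has kernel containing $\langle P\rangle=\ker(E\to E/\langle P\rangle)$, hence factors as $E/\langle P\rangle\to E'/\langle P'\rangle$, an isogeny of degree $r\ell/\ell=r$.

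The main obstacle is the uniqueness input in item~(3): one has to be sure that specifying "ascending" (resp.\ "horizontal with direction $\lambda$") pins down the $\ell$-subgroup of $E'$ uniquely, so that $\langle\psi(P)\rangle$ and $\langle P'\rangle$ genuinely coincide rather than merely being isogenies of the same flavour. This is exactly what Proposition~\ref{prop:isogeny-count} provides — at most one ascending edge, and the horizontal edges are in bijection with $\{\lambda,\mu\}$ — but it is worth stating explicitly, since without it one only gets that $E/\langle P\rangle$ and $E'/\langle P'\rangle$ are each $\ell$-isogenous to curves that are $r$-isogenous, which is weaker than the claim. Everything else is bookkeeping with the lattice dictionary and the fact that $T_\ell(\psi)$ is a $\Z_\ell$-linear isomorphism.
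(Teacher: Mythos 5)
Your proposal is correct and follows essentially the same route as the paper's (much terser) proof: you invoke the fact that $T_\ell(\psi)$ is a $\Z_\ell$-linear isomorphism commuting with the Frobenius (because $\deg\psi=r$ is a unit in $\Z_\ell$) together with Proposition~\ref{prop:matrice-frobenius} for items~(i)--(ii), and for item~(iii) you use the uniqueness, from Proposition~\ref{prop:isogeny-count}, of the ascending or horizontal-with-given-direction $\ell$-subgroup to conclude $\chev{\psi(P)}=\chev{P'}$. The extra detail you supply (the lattice/normal-form computation for (ii), the explicit factoring of $\psi$ through $E/\chev{P}$, and the remark flagging the uniqueness input as the crux of (iii)) is all consistent with, and usefully expands on, the paper's one-line justifications.
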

\begin{proof}
Points~(i) and~(ii) are consequences of Proposition~\ref{prop:matrice-frobenius}
and of the fact that $ψ$, being rational and of degree prime to~$ℓ$,
induces an isomorphism between the Tate modules of~$E$ and~$E'$,
commuting to the Frobenius endomorphisms.
For point~(iii), we just note that
since there exists a unique subgroup of order~$ℓ$ which is
either ascending or horizontal with a given direction,
we must have~$\chev{P'} = \chev{ψ(P)}$.
\end{proof}

\subsection{Galois classes in the $ℓ$-torsion}
Assume that $E$~has a $ℓ$-maximal endomorphism ring.  The
following proposition summarizes the properties of $E[\ell^k]$ that we
will need for our main interpolation algorithm.  If $ℓ$~is odd, let~$α
= v_ℓ(λ^{ℓ-1}-1)$ and~$β=v_ℓ(μ^{ℓ-1}-1)$; if $ℓ=2$,
let~$α=v_2(λ^2-1)-1$ and~$β = v_2(μ^2-1)-1$, and assume without loss
of generality that $α ≥ β$.  Since $λ ≢ μ \pmod{ℓ^{h+1}}$, it is
impossible that $λ ≡ μ ≡ 1 \pmod{ℓ^h}$, so that one at least of the two
valuations~$α, β$ is~$≤ h$, and therefore~$β ≤ h$.
\label{sub:classes}
\begin{prop}\label{prop:classes}
For any~$k$, let~$d_k$ be the degree of the smallest field extension $F/\F_q$
such that~$E[ℓ^k]⊂E(F)$. Then:
\begin{enumerate}
\item The order of $q$ in $(ℤ/ℓℤ)^×$ divides $d_1$,
and $d_1$ divides~$(ℓ-1)$.
\item If $ℓ$~is odd then for all $k ≥ 1$,
$ d_k = ℓ^{\min (v_ℓ (d_1), k - β)}$.
\item If $ℓ=2$ then $d_2 ∈ \acco{1,2}$ and, for all~$k ≥ 2$,
$d_k = ℓ^{\min (v_ℓ (d_2), k - β)}$.
\item Let $[F:\F_q]=d_1ℓ^n$, the group $E[ℓ^{∞}](F)$ is isomorphic to~$(ℤ/ℓ^{n+α} ℤ) × (ℤ/ℓ^{n+β} ℤ)$.
\item\label{prop:classes:count} The group~$E[ℓ^k]$ contains at most
$k · ℓ^{k+β}$ Galois conjugacy classes over~$F_1 = \F_{q^{d_1}}$.
\end{enumerate}
\end{prop}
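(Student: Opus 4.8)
The plan is to reduce the count of Galois conjugacy classes to Burnside's lemma applied to the cyclic group generated by the Frobenius of $F_1$ on $E[ℓ^k]$, where that Frobenius acts through an explicit diagonal matrix. First I would fix a convenient basis: since $E$ lies on the crater, Proposition~\ref{prop:matrice-frobenius} (case $e=h$) together with the remark following it shows that $π|T_ℓ(E)$ is conjugate over $ℤ_ℓ$ to $\left(\begin{smallmatrix}λ & 0\\0 & μ\end{smallmatrix}\right)$. Fixing such a $ℤ_ℓ$-basis of $T_ℓ(E)$ and reducing modulo $ℓ^k$, we get a basis of $E[ℓ^k]≃(ℤ/ℓ^kℤ)^2$ in which $π$ is diagonal, hence one in which the Frobenius $σ=π^{d_1}$ of $F_1=\F_{q^{d_1}}$ acts as $\left(\begin{smallmatrix}s & 0\\0 & t\end{smallmatrix}\right)$, with $s=λ^{d_1}$ and $t=μ^{d_1}$. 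Assume first that $ℓ$ is odd. Since $E[ℓ]⊂E(F_1)$ we have $s≡t≡1\pmod{ℓ}$; and since $d_1\mid ℓ-1$ is prime to $ℓ$, lifting the exponent gives $v_ℓ(s-1)=v_ℓ(λ^{ℓ-1}-1)=α$ and $v_ℓ(t-1)=β$, both $≥1$ because $λ,μ$ are $ℓ$-adic units.

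Next I would count the orbits on $E[ℓ^k]$ of the finite cyclic group $\Gamma$ of automorphisms induced by $σ$ — these are exactly the Galois classes over $F_1$. The order of $\Gamma$ is the least common multiple of the orders of $s$ and $t$ in $(ℤ/ℓ^kℤ)^×$, namely $ℓ^{\max(0,k-α,k-β)}=ℓ^{\max(0,k-β)}$ since $α≥β$. If $β≥k$ then $\Gamma$ is trivial, $E[ℓ^k]$ has $ℓ^{2k}≤kℓ^{k+β}$ classes, and we are done; so assume $β<k$ and put $N=ℓ^{k-β}$. By Burnside's lemma and the diagonal form of $σ$,
\[
\#\{\text{Galois classes over }F_1\}=\frac1N\sum_{m=0}^{N-1}\#\mathrm{Fix}_{E[ℓ^k]}(σ^m)=\frac1N\sum_{m=0}^{N-1}\gcd(ℓ^k,s^m-1)\,\gcd(ℓ^k,t^m-1).
\]
The $m=0$ term is $ℓ^{2k}$, and for $m≥1$ lifting the exponent again gives $v_ℓ(s^m-1)=α+v_ℓ(m)$ and $v_ℓ(t^m-1)=β+v_ℓ(m)$. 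Since $v_ℓ(m)≤k-β-1$ here, we have $β+v_ℓ(m)≤k-1$, so $\gcd(ℓ^k,t^m-1)=ℓ^{β+v_ℓ(m)}$ exactly, while $\gcd(ℓ^k,s^m-1)≤ℓ^k$. Grouping the indices $m∈\{1,\dots,N-1\}$ by $c=v_ℓ(m)∈\{0,\dots,k-β-1\}$, of which there are $ℓ^{k-β-c-1}(ℓ-1)$, the summand becomes independent of $c$ and I would obtain
\[
\#\{\text{Galois classes over }F_1\}\le ℓ^{k+β}+ℓ^{2β}\sum_{c=0}^{k-β-1}ℓ^{k-β-c-1}(ℓ-1)\,ℓ^{c}=ℓ^{k+β}\Bigl(1+(k-β)\tfrac{ℓ-1}{ℓ}\Bigr)\le (k-β+1)\,ℓ^{k+β}\le k\,ℓ^{k+β},
\]
the last step using $β≥1$; this is the claimed bound.

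It remains to handle $ℓ=2$, where $d_1=1$ by part~(i), so $s=λ$, $t=μ$, and the tailored definitions $α=v_2(λ^2-1)-1$, $β=v_2(μ^2-1)-1$ (both $≥2$) take over. The point I expect to require the most care is that $(ℤ/2^kℤ)^×$ is not cyclic: the valuations $v_2(s^m-1)$ and $v_2(t^m-1)$ now depend on the parity of $m$ and on whether $λ,μ$ are $≡1$ or $≡3\pmod{4}$, and $\#\Gamma$ must be read off from part~(iii) rather than directly from $α,β$. Splitting the Burnside sum along these cases is routine, and in each the estimate is no worse than in the odd case, so $\#\{\text{Galois classes over }F_1\}\le k\,2^{k+β}$ still holds, with room to spare. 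I would stress that even in the odd case one must keep $β+v_ℓ(m)$ exact inside $\#\mathrm{Fix}(σ^m)$: bounding it crudely by $k$ would cost a factor $ℓ/(ℓ-1)$ and destroy the clean constant $k$.
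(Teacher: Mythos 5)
Your Burnside-lemma argument for part~(v) is correct, and it is a genuinely different route from the paper's. The paper counts orbits by size: for each $i\le k-\beta$ it bounds the number of points lying in an orbit of size $\le\ell^i$ by $\ell^{\min(\alpha+i,k)+\min(\beta+i,k)}$, deduces at most $\ell^{k-i+\beta}$ such orbits, and sums over~$i$. You instead sum $|\mathrm{Fix}(\sigma^m)|$ over the acting cyclic group and regroup by $v_\ell(m)$; the two bookkeepings are dual ways of telescoping the same $\alpha$-, $\beta$-valuation data, and your caveat that $v_\ell(t^m-1)$ must be kept exact while $v_\ell(s^m-1)$ may be rounded up to $k$ is precisely the precaution that makes either version land on the clean constant $k$.

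The gap is that you prove only part~(v) of a five-part proposition, and your own argument leans on the unproved parts. You invoke~(i) to get $d_1\mid\ell-1$ (hence $d_1=1$ for $\ell=2$), the identities $v_\ell(s-1)=\alpha$, $v_\ell(t-1)=\beta$ you derive are essentially part~(iv), and you explicitly defer the order of $\Gamma$ for $\ell=2$ to part~(iii). None of these are hard — in the paper, (i) follows from observing that $d_k$ is the lcm of the orders of $\lambda,\mu$ in $(\Z/\ell^k\Z)^\times$ together with $\lambda\mu=q$, and the single computation $(\pi^N-1)(xP+yQ)=(\lambda^N-1)xP+(\mu^N-1)yQ$ in a diagonal basis yields (ii)--(iv) at once — but they need to be established, not assumed. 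Likewise the $\ell=2$ case of~(v) is waved through as ``routine''; the conclusion holds (odd-$m$ terms can only shrink when $\lambda$ or $\mu\equiv 3\pmod 4$, and the tailored $\ell=2$ definitions force $\beta\ge 2$, comfortably more than the $\beta\ge 1$ your last inequality needs), but since this is exactly where the shifted $\alpha,\beta$ earn their keep, the two or three lines of the $\ell=2$ Burnside sum should be written out.
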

\begin{proof}
The degree~$d_k$ is exactly the order of the matrix~$π|E[ℓ^k]$.
It is therefore the least common multiple of the multiplicative orders
of~$λ, μ$ modulo~$ℓ^k$.
This proves~(i) using the fact that~$λ · μ = q$.
For points~(ii)--(v) we may assume that $d_1 = 1$.
Then, for any~$N$, $v_ℓ(λ^{2N}-1) = α + v_{ℓ} (2N)$.
Let~$(P, Q)$ be a diagonal basis of~$E[ℓ^k]$.
The point $(π^N - 1) (x P + y Q) = (λ^N-1) x P + (μ^N-1) y Q$
is zero iff $v_{ℓ} (x) + α + v_{ℓ} (N) ≥ k$
and~$v_{ℓ} (y) + β + v_{ℓ} (N) ≥ k$. This shows~(iv).
The largest Galois classes
are those for which~$v_{ℓ} (y) = 0$ and their size is~$ℓ^{k - β}$,
proving~(ii) and~(iii).
Moreover, for any~$i ≤ k-β$ the points in an orbit of size~$≤ ℓ^i$
are those for which~$v_{ℓ} (x) ≥ k - α - i$ and~$v_{ℓ} (y) ≥ k - β - i$;
there are at most $ℓ^{\min(α+i, k) + \min (β+i, k)}$ such points,
and therefore $ℓ^{\min(α+i, k) + \min(β, k-i)} ≤ ℓ^{k-i+β}$
corresponding classes.
Summing this over all~$i$ proves~(v).
\end{proof}
\section{Computing the action of the Frobenius endomorphism}
\label{sec:acti-frob-endm}

We continue here our study on the action of the Frobenius $π$ on
$E[ℓ^k]$.  Given an ordinary elliptic curve~$E$ with $ℓ$-maximal endomorphism
ring, we explicitly compute diagonal and horizontal bases of $E[ℓ^k]$
as defined in the previous section.  We will use the latter basis of
$E[ℓ^k]$ in Section~\ref{sub:final-interp}, to put restrictions on the
interpolation problem of our algorithm.

We suppose that $k \geqslant h$. By Proposition~\ref{prop:classes}, there exists a Kummer tower
$F₀⊂\cdots⊂F_{k-\beta}$ such that all the points of~$E[ℓ^k]$ are
rational over~$F_{k-\beta}$. The algorithms presented next assume that
the tower has already been computed.

\subsection{Computation of a diagonal basis}
\label{ss:diagonal}

In Algorithm~\ref{alg:diagonal} below, we describe how to compute
eigenvalues of the Frobenius $\bmod \ell^k$ and corresponding
eigenvectors in the $\ell^{k}$-torsion subgroup.  We
write~$Q ← \sfdiv(ℓ, P)$ for the computation of a preimage of~$P$ by
multiplication by~$ℓ$.

\begin{algorithm}
\caption{\label{alg:diagonal}Computing a diagonal basis of $E[ℓ^k]$}
\begin{algorithmic}[1]
\REQUIRE $E$: an ordinary, $ℓ$-maximal elliptic curve;
$k$: a positive integer;
\ENSURE $(P_k, Q_k )$: a basis of $E[\ell^k]$;
$λ, μ ∈ ℤ/ℓ^k ℤ$
such that $\pi(P_k)= λ P_k$, $ \pi(Q_k)= μ Q_k$.
\STATE{$λ ← 0$; $μ ← 0$; $P_0, Q_0 ← $ neutral element of~$E[ℓ]$.}
\FOR {$i=0$  to  $k-1$}
\STATE\label{alg:diagonal:divide}
  $P' ← \sfdiv(\ell, P_{i})$; $Q' ← \sfdiv (\ell, Q_{i})$.
\STATE\label{alg:diagonal:frobenius}
  compute $\pi|(P',Q')=\left( \begin{smallmatrix}
λ + aℓ^{i} & bℓ^{i}\\
cℓ^{i} & μ + dℓ^{i} \end{smallmatrix} \right) \pmod {ℓ^{i+1}}.$
\STATE\label{alg:diagonal:solve1}
  \textbf{if} $b = 0$ \textbf{then} $x ← 0$;
  solve equation~$c ℓ^{i} + ((d-a) ℓ^{i} + μ-λ) y = 0$;
\STATE\label{alg:diagonal:solve2}
  \textbf{else} solve equation
  $c ℓ^{i} x^2 + ((d-a) ℓ^{i}+ μ-λ) x - b ℓ^{i} = 0$;
  $y ← -cx/b$; \textbf{end if}.
\STATE\label{alg:diagonal:upd-P}
  $P_{i+1} ← P' + y Q'$; $Q_{i+1} ← x P' + Q'$.
\STATE{$λ ← λ + ℓ^{i} (a + bx)$; $μ ← μ + ℓ^{i} (d + cy)$.}
\ENDFOR
\RETURN $(P_{k},Q_{k},λ,μ).$
\end{algorithmic}
\end{algorithm}

\begin{prop}\label{th:diagonal}
  Algorithm~\ref{alg:diagonal} computes a diagonal basis of~$E[ℓ^k]$
  using an expected
  $O(\RR(k-\beta) + ℓ^2\MM(ℓ^{k-β}) + ℓ\MM(\ell^2)\log(\ell)\log(\ell
  q))$ operations in $\F_q$.
\end{prop}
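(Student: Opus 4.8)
The plan is to analyze Algorithm~\ref{alg:diagonal} loop iteration by loop iteration, establishing the following invariant: after the $i$-th pass, $(P_i, Q_i)$ generate a subgroup of $E[\ell^k]$ on which $\ell^{k-i}$-multiplication lands in $E[\ell^i]$ — more precisely, $\ell^{k-i}P_i$ and $\ell^{k-i}Q_i$ form a basis of $E[\ell^i]$ — and that $\pi$ acts on this pair by a matrix that is diagonal modulo $\ell^i$, with diagonal entries the current values of $\lambda,\mu$. Granting the invariant at step $i$, I would pass to step $i+1$: lines~\ref{alg:diagonal:divide} produce preimages $P',Q'$ under $[\ell]$, so that $\ell^{k-i-1}P', \ell^{k-i-1}Q'$ generate $E[\ell^{i+1}]$; line~\ref{alg:diagonal:frobenius} computes the matrix of $\pi$ on $(P',Q')$ modulo $\ell^{i+1}$, which by the invariant has the displayed shape $\left(\begin{smallmatrix}\lambda+a\ell^i & b\ell^i\\ c\ell^i & \mu+d\ell^i\end{smallmatrix}\right)$ (the off-diagonal terms are divisible by $\ell^i$ and the diagonal terms agree with $\lambda,\mu$ mod $\ell^i$). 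The correction $(P_{i+1},Q_{i+1}) = (P'+yQ',\,xP'+Q')$ amounts to conjugating this matrix by $\left(\begin{smallmatrix}1 & x\\ y & 1\end{smallmatrix}\right)$; I would write out that conjugation and check that the conditions solved for $x,y$ in lines~\ref{alg:diagonal:solve1}--\ref{alg:diagonal:solve2} are exactly the requirement that the new off-diagonal entries vanish modulo $\ell^{i+1}$. The two branches ($b=0$ versus $b\neq 0$) handle the degenerate case where the upper-right entry already vanishes, in which case one fixes $x=0$ and only needs to kill the lower-left entry by a linear equation; otherwise one substitutes $y=-cx/b$ to reduce the pair of equations to a single quadratic in $x$. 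Correctness of the eigenvalue update $\lambda \leftarrow \lambda + \ell^i(a+bx)$, $\mu\leftarrow\mu+\ell^i(d+cy)$ is then the statement that the new diagonal entries equal these expressions modulo $\ell^{i+1}$, again read off from the conjugation. At $i=k$ the invariant says $(P_k,Q_k)$ is a basis of $E[\ell^k]$ diagonalizing $\pi$, which is the claimed output; termination of the divide and quadratic-solve steps uses that $\pi$ genuinely has two eigenvalues in $\Z_\ell$ (the hypothesis $(d_K/\ell)=+1$), so a solution $x$ in $\Z/\ell^{i+1}\Z$ always exists.

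The main subtlety — and the step I expect to require the most care — is the solvability and $\ell$-adic valuation bookkeeping in lines~\ref{alg:diagonal:solve1}--\ref{alg:diagonal:solve2}. The linear coefficient $(d-a)\ell^i + \mu - \lambda$ has $\ell$-adic valuation $\min(i, v_\ell(\lambda-\mu)) = \min(i,h)$, so once $i \geq h$ this coefficient is a unit and the equations solve uniquely; for $i < h$ one must check that the quadratic (resp. linear) equation still has a root modulo $\ell^{i+1}$, which follows because $\pi \bmod \ell^{i+1}$ is conjugate to $\left(\begin{smallmatrix}\lambda & \ell^i\\ 0 & \mu\end{smallmatrix}\right)$ or to a diagonal matrix by Proposition~\ref{prop:matrice-frobenius}, hence is always trigonalizable with the required eigenline. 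I would phrase this as: the quadratic $c\ell^i x^2 + ((d-a)\ell^i+\mu-\lambda)x - b\ell^i$ is (up to sign and unit) the characteristic-polynomial-type relation forcing $xP'+Q'$ — or rather the eigenline — and its discriminant is a square modulo $\ell^{i+1}$ precisely because the eigenvalues lie in $\Z_\ell$; pulling back a root from $\Z/\ell^i\Z$ to $\Z/\ell^{i+1}\Z$ is Hensel-type lifting whenever the relevant coefficient is a unit, and the degenerate case is exactly when it is not, handled by the $b=0$ branch.

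For the complexity bound, I would tally the per-iteration costs over $i = 0,\dots,k-1$, working in the Kummer tower $F_0 \subset \cdots \subset F_{k-\beta}$ where (by Proposition~\ref{prop:classes}) all of $E[\ell^k]$ is rational. The point $P_i$ at iteration $i$ lives in $E[\ell^i]$, rational over $F_{\min(v_\ell(d_1),i-\beta)}$, so the level never exceeds $k-\beta$. The dominating operations are: (a) the $\sfdiv(\ell,\cdot)$ calls, each computing a preimage under $[\ell]$, i.e. finding a root of a degree-$\ell$ polynomial, costing $O(\RR(k-\beta))$ in the worst iteration and $O(\RR(i-\beta))$ in general — since $\RR$ grows at least linearly in $\ell^{i}$, summing is dominated by the top level, giving $O(\RR(k-\beta))$; (b) computing the Frobenius matrix on $(P',Q')$, which is a constant number of $(\#F_j)$-th power computations applied to the coordinates of $P',Q'$ — by Lemma~\ref{lemma:frob-ell} each costs $O(\ell^{i-1}\MM(\ell))$ after an $O(\ell\MM(\ell)\log q)$ precomputation, and one must also evaluate the group law and scalar-by-$\lambda$, $\mu$ multiplications, each $O(\MM(\ell^{i-\beta}))$; summed over $i$ this is $O(\ell^2\MM(\ell^{k-\beta}))$, absorbing the geometric sum and the constant number of curve operations per level; (c) solving the quadratic/linear equation over $F_i$, which is an $O(1)$ number of operations plus one square-root extraction, bounded by the root-finding cost already counted, plus the one-time $O(\ell\MM(\ell^2)\log(\ell)\log(\ell q))$ term for the degree-$\ell$ root-finding precomputation at the base level as stated. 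Collecting (a), (b), (c) yields the claimed $O(\RR(k-\beta) + \ell^2\MM(\ell^{k-\beta}) + \ell\MM(\ell^2)\log(\ell)\log(\ell q))$; the "expected" qualifier is inherited from the Las Vegas root-finding routine underlying $\RR$ and $\sfdiv$.
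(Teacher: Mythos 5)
Your proof follows the same architecture as the paper's: establish a loop invariant, interpret lines~\ref{alg:diagonal:solve1}--\ref{alg:diagonal:upd-P} as conjugating the Frobenius matrix to kill the off-diagonal entries, and tally per-iteration costs (divide-by-$\ell$, Frobenius evaluation, linear-combination search, equation solving) observing that the geometric growth makes the last iteration dominant. That is essentially the paper's argument, written out in more detail.

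A few imprecisions worth flagging, none fatal to the structure. First, your invariant is mis-stated: if $P_i,Q_i\in E[\ell^i]$, then $\ell^{k-i}P_i$ has order at most $\ell^{\max(0,2i-k)}$ and so cannot be part of a basis of $E[\ell^i]$; the correct invariant is simply that $(P_i,Q_i)$ is a \emph{diagonal basis of $E[\ell^i]$}, with $\lambda,\mu$ the corresponding eigenvalues modulo~$\ell^i$. Second, the claim that the linear coefficient $(d-a)\ell^i+\mu-\lambda$ ``is a unit once $i\geq h$'' is wrong as stated: its valuation is exactly $\min(i,h)$, so for $i\geq h$ it is $h$, not $0$. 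The paper's resolution is cleaner: divide the whole equation by $\ell^{\min(h,i)}$ (the largest common power of~$\ell$) and then solve modulo~$\ell$ by exhaustive search; for $i\leq h-1$ one has $a=d$, $b=c=0$ and the solution $x=y=0$ is automatic. Your appeal to discriminants being squares and Hensel lifting is a detour that the paper avoids, and the $b=0$ branch handles a vanishing quadratic coefficient, not the failure of a unit condition. Third, the $\ell^2$ factor in the middle term of the complexity has a concrete source you leave vague: after computing $\pi(P')$, expressing it as $\alpha P'+\beta Q'$ is done by exhaustive search over $(\alpha,\beta)\in(\Z/\ell\Z)^2$, costing up to $\ell^2$ point additions at level $i-\beta+1$, hence $O(\ell^2\MM(\ell^{i-\beta+1}))$ per loop; this, not a ``constant number of curve operations,'' is where the $\ell^2$ comes from.
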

\begin{proof}
The equation at line~\ref{alg:diagonal:solve1} or~\ref{alg:diagonal:solve2}
is first divided out by the largest power of~$ℓ$ possible,
which is~$ℓ^{\min (h, i)}$, then solved modulo $ℓ$.
For~$i ≤ h-1$, since~$a = d$ and~$b = c = 0$,
the solutions are~$x = y = 0$, and
steps~\ref{alg:diagonal:solve1} to~\ref{alg:diagonal:upd-P} do nothing.
A straightforward calculation shows that after each loop the basis
$(P_{i+1},Q_{i+1})$ is diagonal.

For~$i = 0$, the basis of~$E[ℓ](F_1)$ at step~\ref{alg:diagonal:divide}
is computed by factoring the $ℓ$-division polynomial
at an expected cost of $O(ℓ\MM(\ell^2)\log(\ell)\log(\ell q))$
operations using the Cantor-Zassenhaus algorithm.
  Once $E[ℓ]$ has been computed, we can factor the
  multiplication-by-$ℓ$ map as a product of two $ℓ$-isogenies. Then,
  for any $P$ defined in $E(F_{i-β})$, the computation of
  $\sfdiv(ℓ, P)$ at Step~\ref{alg:diagonal:divide} costs $O(\RR(i-β+1))$
  operations.
  Evaluating~$π(P')$ in Step~\ref{alg:diagonal:frobenius} has a cost
  of~$O(\ell^{i-\beta}\MM(\ell))$.
  Writing~$π(P')$ as a linear combination~$α P' + β Q'$ needs at
  most~$ℓ^2$ point additions, with a cost of~$ℓ^2
  \mathsf{M}(ℓ^{i-\beta+1})$.
  The cost of solving the equations at Steps~\ref{alg:diagonal:solve1}
  and~\ref{alg:diagonal:solve2} by exhaustive search is negligible, as
  are the remaining operations.  Since the cost of each loop grows
  geometrically, the last loop dominates all others, and gives the
  stated complexity.
\end{proof}

\subsection{Computation of a horizontal basis}
\label{ss:horizontal}

Using the previous algorithm
we can compute a diagonal basis of~$E[ℓ^{h+1}]$.
By Proposition~\ref{prop:diagonal-horizontal},
this gives us a horizontal basis of~$E[ℓ]$.
Thanks to Proposition~\ref{prop:push-horizontal},
we can use this information to improve horizontal points of~$E[ℓ^i]$
into horizontal points of~$E[ℓ^{i+1}]$, as illustrated in
Algorithm~\ref{alg:horizontal}.

\begin{algorithm}
\caption{\label{alg:horizontal}Computing a horizontal point of order~$ℓ^k$}
\begin{algorithmic}[1]
\REQUIRE $(P_0, Q_0)$: a diagonal basis of~$E[ℓ^{h+1}]$; $k$: an integer,
$k ≥ h + 1$.\\
\ENSURE $R$: a horizontal point of~$E[ℓ^k]$ with direction~$λ$.
\FOR{$i = 1$ to~$k-1$}
\STATE $ϕ_i \gets $ isogeny with kernel~$\chev{ℓ^{h} P_{i-1}}$
\STATE $Q_{i} \gets ϕ_i(Q_{i-1})$
\STATE\label{alg:horizontal:divide} $P' \gets \sfdiv(\ell, ϕ_i(P_{i-1}))$.
\STATE\label{alg:horizontal:frob} Write~$π(P') = λ P' + b Q_i$ for~$b ∈ ℤ/ℓℤ$ and
let $P_{i} \gets P' - (b/μ) Q_i$.
\ENDFOR
\RETURN\label{alg:horizontal:final} $R = \widehat{ϕ}_1 ∘ … ∘ \widehat{ϕ}_{k-1}
  (\sfdiv( ℓ^{k-(h+1)}, P_{k-1}) )$. 
\end{algorithmic}
\end{algorithm}
\begin{prop}\label{th:horizontal}
  Algorithm~\ref{alg:horizontal} is correct and computes its output
  using an expected $O(\RR(k-\beta) + k\RR(h-β+1) + kℓ^2\MM(ℓ^{h-β+1}))$
  operations in $\F_q$.
\end{prop}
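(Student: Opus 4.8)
The plan is to verify correctness by an induction on~$i$ maintaining the invariant that, after the $i$-th pass of the loop, the pair~$(P_i,Q_i)$ is a basis of~$E_i[ℓ^{h+1}]$ that is diagonal for the Frobenius of~$E_i$ (where $E_i$ is the target of $ϕ_i∘\cdots∘ϕ_1$), with $π(P_i)=λP_i$ modulo $ℓ$ on its $ℓ$-part but, more importantly, with $ℓ^h P_i$ horizontal of direction~$λ$ and $ℓ^h Q_i$ horizontal of direction~$μ$. The base case $i=0$ is exactly the hypothesis together with Proposition~\ref{prop:diagonal-horizontal}, which turns the diagonal basis of $E[ℓ^{h+1}]$ into a horizontal basis of $E[ℓ]$. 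For the inductive step, $ϕ_i$ is the horizontal $ℓ$-isogeny with direction~$λ$ attached to $ℓ^h P_{i-1}$ (horizontal by induction), so by Proposition~\ref{prop:push-horizontal} the point $Q_i=ϕ_i(Q_{i-1})$ remains horizontal of direction~$μ$; the point $ϕ_i(P_{i-1})$ lies in the kernel direction, so dividing it by~$ℓ$ (Step~\ref{alg:horizontal:divide}) lifts it to order $ℓ^{h+1}$, and the correction $P_i\gets P'-(b/μ)Q_i$ at Step~\ref{alg:horizontal:frob} restores $π(P_i)=λP_i$ on the relevant part, keeping $ℓ^h P_i$ horizontal of direction~$λ$. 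After $k-1$ passes we have a point $P_{k-1}$ of order $ℓ^{h+1}$ on $E_{k-1}$ with $ℓ^h P_{k-1}$ horizontal of direction~$λ$; dividing by $ℓ^{k-(h+1)}$ produces a point of order $ℓ^k$ with $ℓ^{k-1}$ times it horizontal of direction~$λ$, and pushing it back through the dual chain $\widehat{ϕ}_1∘\cdots∘\widehat{ϕ}_{k-1}$ — each $\widehat{ϕ}_j$ horizontal of direction~$λ$, so Proposition~\ref{prop:push-horizontal} applies at every step — yields the claimed horizontal point $R$ of order $ℓ^k$ on $E$ with direction~$λ$.

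For the complexity, I would read off the cost of each operation from the summary after Lemma~\ref{lemma:frob-ell} and from Proposition~\ref{th:diagonal}. The $ℓ^{h+1}$-torsion computations through the loop all take place in fields of bounded level $h-β+1$ above~$F_1$: evaluating an $ℓ$-isogeny $ϕ_i$ or $\widehat{ϕ}_i$ on a point costs $O(ℓ^{h-β}\MM(ℓ))$ via Vélu, writing $π(P')$ as $λP'+bQ_i$ costs at most $ℓ$ point additions hence $O(ℓ^2\MM(ℓ^{h-β+1}))$ absorbing $ℓ$, and the division at Step~\ref{alg:horizontal:divide} costs $O(\RR(h-β+1))$ since it factors the multiplication-by-$ℓ$ map into two $ℓ$-isogenies over a level-$(h-β+1)$ field and extracts a root; summing over the $k-1$ passes gives the $k\RR(h-β+1)+kℓ^2\MM(ℓ^{h-β+1})$ terms. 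The single expensive operation is the final division $\sfdiv(ℓ^{k-(h+1)},P_{k-1})$, which must be carried out in the full tower: each of the $k-h-1$ successive divisions by~$ℓ$ costs one root extraction, and the point lives at level up to $k-β$, so even a single such extraction costs $O(\RR(k-β))$, which dominates and contributes the leading $\RR(k-β)$ term.

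The main obstacle I expect is not the complexity bookkeeping but making the correctness argument airtight, specifically tracking exactly which torsion subgroup each $P_i,Q_i$ lives in and which part of the Frobenius matrix the Step~\ref{alg:horizontal:frob} correction actually fixes: the basis is only guaranteed diagonal modulo the relevant power of~$ℓ$, and one must be careful that the residual off-diagonal terms do not destroy horizontality of $ℓ^h P_i$ (this is where the hypothesis $k\ge h+1$ and the fact that $ℓ^h(λ-μ)$-type corrections vanish are used, via Proposition~\ref{prop:diagonal-horizontal}). A secondary subtlety is checking that the final $\sfdiv(ℓ^{k-(h+1)},P_{k-1})$ can be performed, i.e.\ that $P_{k-1}$ is genuinely $ℓ^{k-h-1}$-divisible inside $E_{k-1}[ℓ^∞]$, which follows because $ℓ^h P_{k-1}$ generates the kernel direction of a horizontal $ℓ^{h}$-isogeny on the crater and the crater curves have the full $ℓ^∞$-torsion available in the tower by Proposition~\ref{prop:classes}(iv).
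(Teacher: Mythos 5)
Your proposal matches the paper's proof closely: same inductive invariant (namely that $(P_i,Q_i)$ is a diagonal basis of $E_i[ℓ^{h+1}]$ and $ϕ_i$ has direction~$λ$), same appeal to Propositions~\ref{prop:diagonal-horizontal} and~\ref{prop:push-horizontal}, and the same breakdown of the per-iteration costs with the final $\sfdiv$ contributing the dominant $\RR(k-β)$ term. One small slip worth flagging: the dual $\widehat{ϕ}_j$ of a horizontal $ℓ$-isogeny with direction~$λ$ is horizontal with direction~$μ$, not~$λ$, although this does not affect the applicability of Proposition~\ref{prop:push-horizontal} in the final step since that proposition precisely propagates a point whose $ℓ$-multiple is horizontal with the \emph{opposite} direction to the isogeny applied.
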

\begin{proof}
Let $E_i$ be the image curve of $ϕ_i$.
We check that at step~$i$ of the loop,
the points~$(P_i, Q_i)$ form a diagonal basis of~$E_i[ℓ^{h+1}]$,
and $ϕ_i$~has direction~$λ$.
The fact that $R$~is horizontal is then a consequence
of Proposition~\ref{prop:push-horizontal}.
The two most expensive operations in the loop are
Steps~\ref{alg:horizontal:divide} and~\ref{alg:horizontal:frob},
costing respectively $O(\RR(h-β+1))$ and $O(ℓ^2\MM(ℓ^{h-β+1}))$, as
discussed in the proof of Proposition~\ref{th:diagonal}. They are
repeated $k$ times. Finally, Step~\ref{alg:horizontal:final} is
dominated by the last $\sfdiv$ operation, which costs $O(\RR(k-β))$.
\end{proof}

One application of Algorithm~\ref{alg:diagonal} (with input~$k ← h+1$)
and two applications of Algorithm~\ref{alg:horizontal} allow us
to compute a horizontal basis of~$E[ℓ^k]$.
This could be done directly with Algorithm~\ref{alg:diagonal} instead,
but that would require computing in an extension $F_{k+h-\beta}$.


\section{Interpolation step}
\label{sec:interpolation}

After constructing bases $(P,Q)$ of $E[ℓ^k]$ and $(P',Q')$ of
$E'[ℓ^k]$ using the algorithms of the previous section, our algorithm
computes the polynomial with coefficients in $\F_q$ mapping $x(P)$ to
$x(P')$, $x(Q)$ to $x(Q')$, and the other abscissas accordingly.  In
this section we give an efficient algorithm for this specific
interpolation problem. The algorithm appeared in~\cite{df10} in the
context of the Artin-Schreier extensions used in Couveignes' isogeny
algorithm; it uses original ideas from~\cite{enge+morain03}. We recall
this algorithm here, and adapt the complexity analysis to our setting 
of Kummer extensions.


We start by tackling a simpler problem. We suppose we have constructed
a tower of Kummer extensions $\F_q=F_0⊂F_1⊂\cdots⊂F_n$, with
$[F₁:F₀]\mid(ℓ-1)$, and $[F_{i+1}:F_i]=ℓ$ for any $i>0$. Given two
elements $v,w∈F_n\setminus F_{n-1}$, we want to compute polynomials
$T$ and $L$ such that:
\begin{itemize}
\item $T \in \F_q[x]$ is the minimal polynomial of $v$, of degree
  $d=\deg T<ℓ^n$;
\item $L$ is in $\F_q[x]$, of degree less than $d$, and $L(v)=w$.
\end{itemize}
Observe that, since $v,w∉F_{n-1}$, we necessarily have $v_ℓ(d)=n-1$,
so that $ℓ^{n-1}≤d<ℓ^n$.
Using a fast interpolation algorithm~\cite[Chapter~10.2]{vzGG}, the
polynomials $T$ and $L$ could be computed in
$O\bigl(n\MM(ℓ^{2n})\log(ℓ)\bigr)$ operations in $\F_q$. We can do
much better by exploiting the form of the Kummer tower, and the
Frobenius algorithm given in Lemma~\ref{lemma:frob-ell}.

Following~\cite{df10}, we first compute $T$, starting from
$T^{(0)}=x-v$.  We let $\sigma_i$ be the map that takes all the
coefficients of a polynomial in $F_{n-i}[x]$ to the power
$\#F_{n-i-1}$. For $i=0,\dots,n-1$, suppose we know a polynomial
$T^{(i)}$ of degree $\ell^i$ in $F_{n-i}[x]$. Then, compute the
polynomials $T^{(i,j)}$ given by
$T^{(i,j)}= \sigma_i^j\bigl (T^{(i)} \bigr)$
for $0 \le j \le \ell-1$,
and define
\begin{equation}
  \label{eq:interp}
  T^{(i+1)}=\prod_{j=0}^{b} T^{(i,j)}
  \qquad\text{with}\qquad
  b = \begin{cases}
    ℓ-1 &\text{if $i<n-1$,}\\
    d/ℓ^{n-1} &\text{otherwise.}
  \end{cases}
\end{equation}
One easily sees that
$T^{(i+1)}$ is the minimal polynomial of $v$ over $F_{n-i+1}$.

\begin{lem}\label{lemma:interpolation:minpoly}
  The cost of computing $T$ is $O(n\MM(ℓ^{n+1})\log(ℓ))$
  operations in $\F_q$.
\end{lem}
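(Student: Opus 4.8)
The plan is to account for the cost level-by-level in the recursion~\eqref{eq:interp}, using two ingredients: the Frobenius-iteration cost from Lemma~\ref{lemma:frob-ell}, and the polynomial-multiplication cost over Kummer towers recalled in the bullet list after that lemma. At level $i$ we hold $T^{(i)}\in F_{n-i}[x]$ of degree $\ell^i$, and we must (a) form the $\ell-1$ conjugates $T^{(i,j)}=\sigma_i^j(T^{(i)})$ and (b) multiply at most $\ell$ of these degree-$\ell^i$ polynomials together to get $T^{(i+1)}$.

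First I would bound step~(a). Each coefficient of $T^{(i)}$ lies in $F_{n-i}$, and $\sigma_i^j$ raises it to the power $\#F_{n-i-1}$ (iterated $j$ times, i.e.\ to a power $(\#F_{n-i-1})^j$, which is again a $(\#F_{m})$-th power for a suitable $m$); by Lemma~\ref{lemma:frob-ell} each such power costs $O(\ell^{\,n-i-1}\MM(\ell))$ operations in $\F_q$ after the one-time precomputation of cost $O(\ell\MM(\ell)\log q)$. There are $\ell^i$ coefficients and $O(\ell)$ values of $j$, so forming all conjugates at level $i$ costs $O(\ell^i\cdot\ell\cdot\ell^{\,n-i-1}\MM(\ell))=O(\ell^{\,n}\MM(\ell))\subseteq O(\MM(\ell^{\,n+1}))$. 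Next, step~(b): multiplying $O(\ell)$ polynomials of degree $\ell^i$ with coefficients in $F_{n-i}$ (a field of degree $\ell^{\,n-i-1}d_1$ over $\F_q$, so of size $O(\ell^{\,n-i})$ as an $\F_q$-vector space) can be done, via Kronecker substitution as in the bullet list, in $O(\MM(\ell^{i}\cdot\ell^{\,n-i}))=O(\MM(\ell^{\,n+1}))$ operations; this also dominates (a). Doing this once more carefully for the special value $b=d/\ell^{\,n-1}$ at the top level changes nothing asymptotically.

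Summing over the $n$ levels $i=0,\dots,n-1$ gives $O(n\MM(\ell^{\,n+1}))$, and the $\log(\ell)$ factor absorbs the cost of the $O(\ell)$-fold product of polynomials done as a balanced binary tree of depth $O(\log\ell)$ (or, alternatively, it covers the inversion/Euclidean-division overhead hidden in Kronecker substitution over $F_{n-i}$), yielding the claimed $O(n\MM(\ell^{\,n+1})\log(\ell))$; the precomputation cost $O(\ell\MM(\ell)\log q)$ is additive and subsumed. The main obstacle is bookkeeping the dependence on both parameters at once: each level trades off the degree $\ell^i$ of the polynomial against the size $\ell^{\,n-i}$ of the coefficient field, and one must check that the product $\ell^i\cdot\ell^{\,n-i}=\ell^{\,n}$ is level-independent so that every level contributes the same $\MM(\ell^{\,n+1})$ — hence the clean factor $n$ — rather than a geometric sum dominated by one end. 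A secondary point to verify is that $\sigma_i$ really is realized by a $(\#F_m)$-th power map to which Lemma~\ref{lemma:frob-ell} applies, so that the conjugate computations are not the bottleneck.
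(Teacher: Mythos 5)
Your proposal is correct and follows essentially the same approach as the paper's proof: compute the conjugates $T^{(i,j)}$ by iterating the Frobenius of Lemma~\ref{lemma:frob-ell}, multiply them with a depth-$O(\log\ell)$ subproduct tree over $F_{n-i}$ via Kronecker substitution at $O(\MM(\ell^{n+1})\log\ell)$ per level, and observe that all $n$ levels cost the same. Two small slips, neither affecting the claimed bound: the per-level Kronecker cost should be written $O(\MM(\ell^{i+1}\cdot\ell^{n-i}))$ using the \emph{output} degree $\ell^{i+1}$, which is where the $\ell^{n+1}$ actually comes from (you wrote $\ell^i\cdot\ell^{n-i}$, which is only $\ell^n$); and $(\#F_{n-i-1})^j$ is not in general a $\#F_m$, so the computation really does proceed by iterating $\sigma_i$ $j$ times as you also say, not by a single application of Lemma~\ref{lemma:frob-ell}.
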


\begin{proof}
  At each step $i$, from the knowledge of $T^{(i)}$ we compute all
  $T^{(i,j)}$ using Lemma~\ref{lemma:frob-ell}. The cost for a single
  polynomial $T^{(i,j)}$ is of $O(ℓ^iℓ^{n-i-1}\MM(ℓ))$ operations,
  \emph{i.e.} $O(ℓ^n\MM(ℓ))$ for all $O(\ell)$ of them.
  From the $T^{(i,j)}$'s we compute $T^{(i+1)}$ using a subproduct
  tree, as in~\cite[Lemma~10.4]{vzGG}. The result has degree
  $O(ℓ^{i+1})$ and coefficients in $F_{n-i}$, thus the overall cost is
  $O(\MM(ℓ^{n+1})\log(ℓ))$. After $T^{(i+1)}$ is computed this way, we
  can convert its coefficients to $F_{n-i-1}$ at no algebraic cost.
  Summing over all $i$, we obtain the stated complexity.
\end{proof}

We can finally proceed with the interpolation itself. First, compute
$w' = w/T'(v)$ and let $L^{(0)}=w'$.  Next, for $i=0,\dots,n-2$,
suppose we know a polynomial $L^{(i)}$ in $F_{n-i}[x]$ of degree less
than $\ell^i$. We compute the polynomials $L^{(i,j)}$ given by
$L^{(i,j)}= \sigma_i^j\bigl(L^{(i)}\bigr)$ and
\[L^{(i+1)} = \sum_{j=0}^{b} L^{(i,j)}\frac{T^{(i+1)}}{T^{(i,j)}},
  \qquad\text{$b$ defined as in Eq.~\eqref{eq:interp}}.\]
As shown in~\cite{df10}, $L^{(n)}$ is the
polynomial $L$ we are looking for.

\begin{prop}
  Given $v,w∈F_n\setminus F_{n-1}$, the cost of computing the
  minimal polynomial $T∈\F_q[x]$ of $v$ and the interpolating
  polynomial $L∈\F_q[x]$ such that $L(v)=w$ is $O(n\MM(ℓ^{n+1})\log(ℓ))$
  operations in $\F_q$.
\end{prop}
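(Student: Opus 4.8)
The plan is to bound the cost of the two phases separately — computing the minimal polynomial $T$, and then the interpolating polynomial $L$ — and observe that both fit in $O(n\MM(ℓ^{n+1})\log(ℓ))$. The first phase is already handled by Lemma~\ref{lemma:interpolation:minpoly}, so the only new work is the analysis of the $L^{(i)}$ recursion, which I would model closely on the proof of that lemma.

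First I would set up the recursion: we start from $L^{(0)} = w' = w/T'(v)$, an element of $F_n$ (the single division in $F_n$ costs $O(\MM(ℓ^n)\log(ℓ^n))$, which is negligible), and at each step we pass from $L^{(i)} \in F_{n-i}[x]$ of degree $<ℓ^i$ to $L^{(i+1)} \in F_{n-i-1}[x]$ of degree $<ℓ^{i+1}$. For the cost of one step: the $ℓ$ conjugates $L^{(i,j)} = \sigma_i^j(L^{(i)})$ are each computed via Lemma~\ref{lemma:frob-ell} applied coefficient-wise — a polynomial of degree $ℓ^i$ with coefficients in $F_{n-i}$, so $O(ℓ^i \cdot ℓ^{n-i-1}\MM(ℓ)) = O(ℓ^n\MM(ℓ))$ per conjugate, $O(ℓ^{n+1}\MM(ℓ))$ for all of them, which is absorbed into $O(\MM(ℓ^{n+1})\log(ℓ))$. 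The quotients $T^{(i+1)}/T^{(i,j)}$ and the weighted sum $\sum_j L^{(i,j)} T^{(i+1)}/T^{(i,j)}$ are exactly the "combination" step of a fast (sub)product-tree interpolation: all polynomials involved have degree $O(ℓ^{i+1})$ with coefficients in $F_{n-i}$, so using Kronecker substitution as in the earlier bullet list the cost is $O(\MM(ℓ^{i+1}\cdot ℓ^{n-i})\log(ℓ)) = O(\MM(ℓ^{n+1})\log(ℓ))$ — here I'd cite \cite[Algorithm~10.9 / Lemma~10.4]{vzGG} for the tree-based linear combination. After forming $L^{(i+1)}$ one rewrites its coefficients in $F_{n-i-1}$ at no algebraic cost, as in the membership-testing remark for Kummer towers.

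Summing the per-step cost over $i = 0, \dots, n-1$ gives $O(n\MM(ℓ^{n+1})\log(ℓ))$ for the second phase. Adding the cost of $T$ from Lemma~\ref{lemma:interpolation:minpoly}, and the negligible cost of the initial division $w/T'(v)$ (which itself requires evaluating $T'$ at $v$, again $O(\MM(ℓ^n)\log(ℓ^n))$ via Horner or a fast evaluation), yields the claimed total. The only point needing a word of justification is correctness — that $L^{(n)}$ indeed satisfies $L^{(n)}(v) = w$ and $\deg L^{(n)} < d$ — but this is precisely the content of \cite{df10}, whose argument transfers verbatim since it only uses the multiplicativity of $T$ under the conjugation maps $\sigma_i$ and the interpolation formula, both of which are structural and independent of whether the tower is Artin–Schreier or Kummer; so I would simply invoke that reference rather than reproving it.

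The main obstacle, such as it is, is not a deep one: it is making sure the degree bookkeeping in the linear-combination step is tight, i.e. that at level $i$ the polynomial $T^{(i+1)}/T^{(i,j)}$ really does have degree $O(ℓ^{i+1})$ and coefficients in a field of degree $O(ℓ^{n-i})$ over $\F_q$, so that each of the $n$ steps costs $O(\MM(ℓ^{n+1})\log(ℓ))$ rather than something that accumulates an extra factor. Once that is pinned down the summation is immediate, since the per-level bound does not decay in $i$ and there are $n$ levels.
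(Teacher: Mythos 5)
Your treatment of the $L^{(i)}$ recursion, of the per-level cost of the conjugates, and of the subproduct-tree combination step is correct and matches the paper's own argument. The gap is in the computation of $T'(v)$, which you dismiss in a parenthetical with the bound $O(\MM(\ell^n)\log(\ell^n))$ ``via Horner or a fast evaluation.'' That claim does not hold up: Horner's rule evaluates the degree-$(d-1)$ polynomial $T'$ (here $d \approx \ell^n$) at $v \in F_n$ using $O(d)$ multiplications in $F_n$, each costing $\MM(\ell^n)$ operations in $\F_q$, so $O(\ell^n\MM(\ell^n))$ in total --- essentially quadratic in $\ell^n$, which far exceeds the $O(n\MM(\ell^{n+1})\log\ell)$ budget. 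Nor is there a generic ``fast evaluation'' to fall back on: multipoint evaluation is designed for many points, not a single one, and modular composition (Brent--Kung) would cost around $\ell^{n(\omega+1)/2}$ operations, which is still super\-linear in $\ell^n$.

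The paper handles this step with a different idea, and it is where the intermediate minimal polynomials $T^{(i)}$ earn their keep a second time: one computes $T'(v)$ by successive Euclidean reductions down the tower, reducing $T'$ modulo $T^{(n-1)}$ (degree $O(\ell^{n-1})$ over $F_1$), then modulo $T^{(n-2)}$ (degree $O(\ell^{n-2})$ over $F_2$), and so on down to $T^{(0)} = x - v$; the final constant remainder, an element of $F_n$, is exactly $T'(v)$. At stage $i$ one divides a polynomial of degree $O(\ell^{n-i+1})$ by one of degree $O(\ell^{n-i})$ over $F_i$, and via Kronecker substitution each such division costs $O(\MM(\ell^{n+1}))$, giving $O(n\MM(\ell^{n+1}))$ overall --- within the stated budget but not negligible, and not attainable by Horner. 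Without this chained-reduction step (or something equivalent) the claimed total complexity does not follow, so the proof as written has a genuine hole precisely at the place you treated as routine.
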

\begin{proof}
  After the polynomials $T^{(i)}$ have been computed, we need to
  compute $T'(v)$. This is done by means of successive Euclidean
  remainders, since
  $T'(v) = (((T' \bmod T^{(1)}) \bmod T^{(2)}) \cdots \bmod T^{(n)})$.
  At stage $i$, we have to compute the Euclidean division of a
  polynomial of degree $O(ℓ^{n-i+1})$ by one of degree $O(ℓ^{n-i})$ in
  $F_i[x]$. Using the complexities from Section~\ref{sub:towers} we
  deduce that each division can be done in time $O(\MM(ℓ^{n+1}))$, for
  a total of $O(n\MM(ℓ^{n+1}))$ operations. Then, computing
  $w' = w/T'(v)$ takes $O(\MM(\ell^n)\log(\ell^n))$ operations.

  Finally, at each step $i$, the polynomials $L^{(i,j)}$ are computed
  at a cost of $O(ℓ^n\MM(ℓ))$, as in the proof of
  Lemma~\ref{lemma:interpolation:minpoly}.  The computation of
  $L^{(i+1)}$ uses the same subproduct tree as for the computation of
  $T^{(i)}$, requiring $O(\log ℓ)$ additions, multiplications and
  divisions of polynomials of degree $O(ℓ^{i+1})$ with coefficients in
  $F_{n-i}$, for a total of $O(\MM(ℓ^{n+1})\log(ℓ))$. Summing over all
  $i$, the complexity statement follows readily.
\end{proof}

We end with the general problem of interpolating a polynomial in
$\F_q[x]$ at points of $F_n$.

\begin{prop}\label{prop:interpol}
  Let $(v_1,w_1),\dots,(v_s,w_s)$ be pairs of elements of $F_n$, let
  $t_i$ be the degree of the minimal polynomial of $v_i$, and let
  $t=\sum t_i$. The polynomials
  \begin{itemize}
  \item $T∈\F_q[x]$ of degree $t$ such that $T(v_i)=0$ for all $i$,
    and
  \item $L∈\F_q[x]$ of degree less than $t$ such that $L(v_i)=w_i$ for
    all $i$
  \end{itemize}
  can be computed using
  $O\bigl(\MM(t)\log(s) + n\MM(ℓ^2t)\log(ℓ)\bigr)$ operations in $\F_q$.
\end{prop}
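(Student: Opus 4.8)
The plan is to reduce the general multi-point problem to the single-pair problem already solved, by a divide-and-conquer argument over the pairs $(v_i,w_i)$. Observe first that one may group the pairs by the value $n_i := v_\ell(t_i)+1$, i.e.\ by the smallest subfield $F_{n_i}$ containing $v_i$; within each group $v_i\in F_{n_i}\setminus F_{n_i-1}$ and the single-pair proposition applies. But a cleaner route, avoiding such bookkeeping, is to work directly with a subproduct tree over all $s$ pairs. For each $i$ we first compute $T_i\in\F_q[x]$, the minimal polynomial of $v_i$, and the interpolant $L_i\in\F_q[x]$ with $L_i(v_i)=w_i$, using the previous proposition at cost $O(n_i\MM(\ell^{n_i+1})\log\ell)$; since $\ell^{n_i-1}\le t_i<\ell^{n_i}$, this is $O(n\MM(\ell^2 t_i)\log\ell)$, and summing over $i$ (using superadditivity of $\MM$) gives the $O(n\MM(\ell^2 t)\log\ell)$ term. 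Then $T=\prod_i T_i$ (the $v_i$ need not be distinct, but if they are, the minimal polynomials are pairwise coprime; one should simply assume the $v_i$ distinct, or discard duplicates) is assembled with a subproduct tree in $O(\MM(t)\log s)$ operations as in~\cite[Lemma~10.4]{vzGG}.

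The second step is the Chinese-remainder / fast-interpolation combination of the $L_i$ into a single $L$. Since the $T_i$ are pairwise coprime with product $T$, there is a unique $L\in\F_q[x]$ of degree $<t$ with $L\equiv L_i\pmod{T_i}$ for all $i$, and this $L$ satisfies $L(v_i)=L_i(v_i)=w_i$. The standard fast CRT algorithm~\cite[Chapter~10.3]{vzGG} recovers $L$ from the $L_i$ and the subproduct tree of the $T_i$ in $O(\MM(t)\log s)$ operations in $\F_q$, which contributes the remaining $O(\MM(t)\log s)$ term. Putting the two steps together yields the claimed bound.

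The main obstacle is the cost accounting for the first step: one must check that applying the single-pair proposition to each $v_i$ \emph{in its own minimal tower} $F_0\subset\cdots\subset F_{n_i}$, rather than in the full tower of height $n$, is legitimate, and that the per-pair cost $O(n_i\MM(\ell^{n_i+1})\log\ell)$ sums correctly. The bound $n_i\le n$ handles the logarithmic factor, and $\ell^{n_i+1}\le \ell^2 t_i$ together with $\sum_i\MM(\ell^2 t_i)\le\MM(\ell^2 t)$ (superadditivity of $\MM$, under the assumptions of~\cite[Chapter~8.3]{vzGG}) handles the rest; one should also note that moving $v_i,w_i$ into $F_{n_i}$ is free in a Kummer tower, by the membership-testing remark in Section~\ref{sub:towers}. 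A minor point to address is the case $t_i$ not a power of $\ell$ times a unit in the expected range — but as already noted in the single-pair discussion, $v_i\notin F_{n_i-1}$ forces $v_\ell(t_i)=n_i-1$, so no difficulty arises. Everything else is a routine invocation of fast polynomial arithmetic.
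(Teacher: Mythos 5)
Your proposal is correct and follows essentially the same route as the paper: apply the single-pair interpolation result to each $(v_i,w_i)$ to get the $T_i,L_i$ (with the same $O(n\MM(\ell^2 t)\log\ell)$ cost accounting via superlinearity of $\MM$), then assemble $T$ via a subproduct tree and recover $L$ by fast Chinese remaindering, each in $O(\MM(t)\log s)$. The only difference is an indexing shift in the definition of $n_i$ and a few added clarifying remarks (distinctness of the $v_i$, legitimacy of working in the subtower $F_{n_i}$), which are welcome but do not change the argument.
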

\begin{proof}
  The polynomial $T$ is simply the product of all the minimal
  polynomials $T_i$. Let $n_i=v_ℓ(t_i)$, so that
  $v_i,w_i∈F_{n_i+1}\setminus F_{n_i}$, and $ℓ^{n_i}≤t_i<ℓ^{n_i+1}$.
  We convert $(v_i,w_i)$ to a pair of elements of $F_{n_i+1}$ at no
  algebraic cost, then we compute $T_i$ as explained previously at a
  cost of $O(n\MM(ℓ^{n_i+2})\log(ℓ))$ operations. Bounding $ℓ^{n_i}$ by
  $t_i$, summing over all $i$, and using the superlinearity of $\MM$,
  we obtain a total cost of $O(n\MM(ℓ^2t)\log(ℓ))$ operations.
  Simultaneously, we compute all the polynomials $L_i$ such that
  $L_i(v_i)=w_i$, at the same cost.

  Then we arrange the $T_i$'s into a binary subproduct tree and
  multiply them together. A balanced binary
  tree, though not necessarily optimal, has a depth of
  $O(\log (s))$, and requires $O(\MM(t))$ operations per level. Thus
  we can bound the cost of computing $T$ by $O(\MM(t)\log(s))$.

  Finally, using the same subproduct tree structure, we apply the
  Chinese remainder algorithm of~\cite[Chapter~10]{vzGG} to compute
  the polynomial $L$ at the same cost $O(\MM(t)\log(s))$.
\end{proof}

\section{The complete algorithm}
\label{sec:complete-algorithm}

We finally come to the description of the full algorithm. Given two
$j$-invariants, defining two elliptic curves $E$ and $E'$, and an
integer $r$, we want to compute an isogeny $ψ:E→E'$ of degree $r$.
Since the algorithms of Section~\ref{sec:acti-frob-endm} apply to
curves on top of volcanoes with cyclic crater, we first need to
determine a small Elkies prime $ℓ$ for $E$ and $E'$, and then reduce
to an explicit isogeny problem on the crater of the
$ℓ$-volcanoes. These steps are discussed and analyzed next.

\subsection{Finding a suitable $ℓ$-volcano}
\label{sub:shape-volcano}

Our algorithm uses an Elkies prime~$ℓ$.  Since~$d_K$ is not assumed to
be known yet, we need to be able to compute the height $h$ of the
volcano, the shape of its crater, as well as the shortest $ℓ$-isogeny
chain from~$E$ to the crater.

The algorithms of Fouquet and Morain~\cite{volcano} compute the height
$h$ and find a curve $E_{\max}$ on the crater at the cost of $O(ℓh^2)$
factorizations of the $ℓ$-th modular polynomial $Φ_ℓ$. The polynomial
$Φ_ℓ$ is computed using $\tildO(ℓ³\log(ℓ))$ boolean operations, then
each factorization costs an expected $O(\MM(ℓ)\log(ℓ)\log(ℓq))$
operations using the Cantor-Zassenhaus algorithm (more efficient
methods for special instances of volcanoes are presented
in~\cite{MiretMRV05} and in~\cite{ionica+joux13}, but we do not
discuss them). Working on $E$ and $E'$, we compute the shortest path
of $ℓ$-isogenies~$α: E → E_{\max}$, $α': E' → E'_{\max}$ linking the
curves~$E, E'$ to the craters.
We still have to determine the
shape of these craters.  Since the height~$h$ of the volcano is known,
using Algorithm~\ref{alg:diagonal} we can compute a matrix of
$π|E_{\max}[ℓ^{h+1}]$.  If this matrix has two distinct eigenvalues then the
crater is cyclic, otherwise it is not.


By Proposition~\ref{prop:parallel}, the depth of~$E$ and $E'$ below
their respective craters is the same.  By
Proposition~\ref{prop:parallel}~\ref{prop:parallel:ascent}, the
curves~$E_{\max}$ and~$E'_{\max}$ are again $r$-isogenous; we can use
our algorithm to compute such an isogeny~$ψ_{\max}$.  Then, 
since $ℓ$ is coprime to $r$, $ψ =
(α')^{-1} ∘ ψ_{\max} ∘ α$ is well defined and is the required $r$-isogeny.
Its kernel can
be computed in $O(h\MM(ℓr)\log(ℓr))$ operations by evaluating the dual isogeny $\hat{α}$
on the kernel of $ψ_{\max}$ via a sequence of resultants.

\subsection{Interpolating the isogeny}
\label{sub:final-interp}

We now assume that both curves~$E, E'$
have $ℓ$-maximal endomorphism rings.
We fix bases of~$E[ℓ^k]$, $E'[ℓ^k]$ and write~$π, π'$ for the matrices
of the Frobenius.
Since $ψ$~is rational, its matrix satisfies the relation~$π' · ψ = ψ · π$
in $ℤ_ℓ^{2×2}$ and hence in~$(ℤ/ℓ^k ℤ)^{2 × 2}$.

If diagonal bases of~$E[ℓ^k], E'[ℓ^k]$ are used, then,
since $π$~is a cyclic endomorphism of~$ℤ_ℓ^2$,
this condition seems to ensure that $ψ$~is a diagonal matrix;
however, $ℤ/ℓ^k ℤ$~is not an integral domain
and $π$~is congruent, modulo~$ℓ^h$, to the scalar matrix~$λ$,
so we can only say that~$ψ\pmod{ℓ^{k-h}}$ is diagonal.
If on the other hand we choose \emph{horizontal} bases
of~$E[ℓ^k], E'[ℓ^k]$ then, by Proposition~\ref{prop:parallel}~\ref{prop:parallel:func},
we know that $ψ$~is a diagonal matrix.

We then enumerate all the $ℓ^{2k-2}$ invertible diagonal matrices; for
each matrix~$M$, we interpolate the action of~$M$ on~$E[ℓ^k]$ as a
rational fraction, and verify that it is an $r$-isogeny. The
successful interpolation will be our explicit isogeny~$ψ$.
Precisely, we interpolate using the abscissas of non-zero points of $E[ℓ^k]$;
there are $(ℓ^{2k}-1)/2$ distinct such abscissas (or $2^{2k-1}+1$ when
$ℓ=2$).  The isogeny~$ψ$ acts on abscissas as a rational fraction of
degrees~$(r, r-1)$, which is thus defined by $2r$ coefficients; 
knowing this rational function allows us to find the kernel of $\psi$,
and recover $\psi$ itself using V\'elu's formulas. For this
method to work, we therefore select the smallest~$k ≥ h+1$ such
that~$ℓ^{2k}-1 > 4r$.

Summarizing, our algorithm for two
$ℓ$-maximal curves proceeds as follows:
\begin{enumerate}
\item\label{alg:ours:horizontal} Use Algorithms~\ref{alg:diagonal}
  and~\ref{alg:horizontal} to compute horizontal bases $(P,Q), (P', Q')$
  of $E[ℓ^k], E'[ℓ^k]$;
\item\label{alg:ours:T} Compute the polynomial~$T$ vanishing
  on the abscissas of $\langle P,Q\rangle$ as in
  Section~\ref{sec:interpolation};
\item\label{alg:ours:for} For each invertible diagonal matrix
  $\smat{a&0\\0&b}$ in $(ℤ/ℓ^k ℤ)^{2×2}$:
  \begin{enumerate}
  \item\label{alg:ours:interp} compute the interpolation polynomial
    $L_{a,b}$ such that
    $L_{a,b} (x (u P + v Q)) = x(a\, u\,P' + b\,v\, Q')$ for all
    $u, v ∈ ℤ/ℓ^k ℤ$;
  \item\label{alg:ours:cauchy} Use the \emph{Cauchy interpolation
      algorithm} of~\cite[Chapter~5.8]{vzGG} to compute a rational
    fraction $F_{a,b}≡L_{a,b}\pmod{T}$ of degrees~$(r, r-1)$;
  \item If $F_{a,b}$ defines an isogeny of degree $r$, return it and
    stop.
  \end{enumerate}
\end{enumerate}

\begin{prop}
  \label{prop:full-complexity}
  Assuming that $ℓ^h<√ r$, the algorithm above
  computes an isogeny ${ψ:E→E'}$ in expected time
$O\Bigl(\bigl(r ℓ^2\MM(rℓ^4) +  \MM(rℓ^3)\log(ℓq)\bigr)\log(r)\log(ℓ) \Bigr).$
\end{prop}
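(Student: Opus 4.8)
The plan is to bound the cost of each of the three steps of the displayed algorithm, and then count the expected number of iterations of the loop in Step~\ref{alg:ours:for}. Recall that $k$ is chosen minimal with $\ell^{2k}-1>4r$, so that $\ell^{2k}\in\Theta(r\ell^2)$ at worst, i.e.\ $\ell^k\in O(\sqrt{r}\,\ell)$; combined with the hypothesis $\ell^h<\sqrt r$ this keeps all the tower heights under control, since the points of $E[\ell^k]$ live in $F_{k-\beta}$ with $\ell^{k-\beta}\le \ell^k\in O(\sqrt r\,\ell)$.

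First I would handle Step~\ref{alg:ours:horizontal}. By the discussion following Proposition~\ref{th:horizontal}, a horizontal basis is obtained from one call to Algorithm~\ref{alg:diagonal} with parameter $h+1$ and two calls to Algorithm~\ref{alg:horizontal} with parameter $k$. Using Propositions~\ref{th:diagonal} and~\ref{th:horizontal}, and the hypothesis $\ell^h<\sqrt r$ (so $h-\beta+1$ is small relative to $k-\beta$), the dominant contribution is $O(\RR(k-\beta))+O(k\,\RR(h-\beta+1))+O(k\ell^2\MM(\ell^{h-\beta+1}))$; substituting the bound $\RR(i)\in O(\ell^i\MM(\ell^{i+1})\log(\ell)\log(\ell q))$ and $\ell^{k-\beta}\in O(\sqrt r\,\ell)$ shows this is absorbed into the claimed bound. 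Next, Step~\ref{alg:ours:T} is an instance of Proposition~\ref{prop:interpol} with $s=O(\ell^{2k})=O(r\ell^2)$ pairs and $t=\sum t_i$ equal to the number of distinct abscissas, which is $O(\ell^{2k})=O(r\ell^2)$, living in $F_{k-\beta}$ of height $n=k-\beta\in O(\log(r\ell^2))$; this gives $O(\MM(r\ell^2)\log(r\ell^2)+n\MM(\ell^2\cdot r\ell^2)\log\ell)=O(\MM(r\ell^4)\log(r)\log(\ell))$, again within the bound.

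Then I would analyze one iteration of the loop body (Steps~\ref{alg:ours:interp}--\ref{alg:ours:cauchy}). The interpolation polynomial $L_{a,b}$ is produced by the Chinese-remainder part of Proposition~\ref{prop:interpol}, reusing the subproduct tree already built for $T$, at cost $O(\MM(r\ell^2)\log(r)+ n\MM(r\ell^4)\log(\ell))=O(\MM(r\ell^4)\log(r)\log(\ell))$; in fact one should note that the pairs $(v_i,w_i)$ change with $(a,b)$ but the $v_i$ do not, so the minimal polynomials $T_i$ and the tree are computed once and for all, and only the residues are recomputed. The Cauchy interpolation step is a single call to the algorithm of~\cite[Chapter~5.8]{vzGG} on a polynomial of degree $O(r\ell^2)$ modulo $T$, costing $O(\MM(r\ell^2)\log(r\ell^2))=O(\MM(r\ell^2)\log(r))$ operations, plus the cost of verifying whether $F_{a,b}$ defines a degree-$r$ isogeny, which by V\'elu's formulas is another $O(\MM(r))$. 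The additional $\log(\ell q)$ factor on the $\MM(r\ell^3)$ term comes from the inversions/divisions over $\F_q$ needed inside Cauchy interpolation and when forming $w'=w/T'(v)$; I would track this carefully since it is the only place the field size $q$ enters.

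Finally, the iteration count: the loop ranges over all $\ell^{2k-2}$ invertible diagonal matrices, but by Proposition~\ref{prop:parallel}~\ref{prop:parallel:func} the true isogeny $\psi$ is itself a diagonal matrix in the horizontal bases, so \emph{some} matrix $M$ in the enumeration equals the matrix of $\psi$; hence the loop terminates after at most $\ell^{2k-2}\le \ell^{2k}\in O(r\ell^2)$ iterations, and multiplying the per-iteration cost $O\bigl((\MM(r\ell^4)\log r\log\ell + \MM(r\ell^3)\log(\ell q))\cdot 1\bigr)$ — wait, one must be careful: the per-iteration cost is dominated by $O(\MM(r\ell^4)\log(r)\log(\ell))$ for the interpolation, which times $O(r\ell^2)$ iterations would be too large. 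The resolution, and the main obstacle in the argument, is that the expensive high-degree interpolation (Step~\ref{alg:ours:T} and the tree) is done once, while inside the loop one only does the Chinese-remainder recombination and Cauchy step: the recombination using a \emph{fixed} tree costs only $O(\MM(r\ell^2)\log(r))$ per iteration (no factor $n\MM(r\ell^4)$, since the per-level polynomials for the leaves are already in place and only the combining multiplications/divisions of degree $O(r\ell^2)$ recur), and similarly the Cauchy step is $O(\MM(r\ell^2)\log(r))$ plus $O(\MM(r)\log(\ell q))$ for divisions. Thus the loop contributes $O(r\ell^2)\cdot O(\MM(r\ell^2)\log(r)) = O(r\ell^2\MM(r\ell^2)\log(r))$, which is $O(r\ell^2\MM(r\ell^4)\log(r)\log(\ell))$ after coarsening, matching the first term; the one-time costs of Steps~\ref{alg:ours:horizontal}--\ref{alg:ours:T} are subsumed, and the $\MM(r\ell^3)\log(\ell q)$ term records the cumulative field-inversion cost across all iterations. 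Assembling these bounds and invoking the superlinearity of $\MM$ yields the stated expected running time.
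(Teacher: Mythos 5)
Your overall accounting matches the paper's, and the final bound is right, but there is a genuine reasoning error in the middle of the argument. The ``wait, one must be careful'' detour is based on a misreading of the claimed bound: the naive product of $O(r\ell^2)$ iterations and a per-iteration cost of $O(\MM(r\ell^4)\log(r)\log(\ell))$ is \emph{exactly} the first term $r\ell^2\MM(r\ell^4)\log(r)\log(\ell)$ of the stated complexity, not ``too large.'' The ``fix'' you then propose — that the per-iteration cost of building $L_{a,b}$ is only $O(\MM(r\ell^2)\log(r))$ because the subproduct tree can be reused — is incorrect. What can be reused from one $(a,b)$ to the next are the $T_i$ and the tree built from them (these depend only on the fixed $v_i$). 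But the per-leaf interpolants $L_i$ satisfy $L_i(v_i) = w_i$ with $w_i = x(a\,u\,P' + b\,v\,Q')$, which changes with $(a,b)$; computing these $L_i$ costs $O(n\MM(\ell^2 t)\log(\ell)) = O(\MM(r\ell^4)\log(r)\log(\ell))$ and must be redone each iteration, just as the paper charges. You ``coarsen'' your too-small estimate back up to the right number, so the final bound survives, but the intermediate claim is false and the detour obscures rather than clarifies the argument.

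Two further, lesser issues: the $\MM(r\ell^3)\log(\ell q)$ term in the bound does not ``record the cumulative field-inversion cost across all iterations''; it is the one-time cost of Step~\ref{alg:ours:horizontal} (computing the horizontal bases), where the $\log(\ell q)$ comes from the root-finding cost $\RR(\cdot)$ via Cantor–Zassenhaus, as the paper's proof states. You also conflate $s$ (the number of Galois classes, bounded by $O(k\ell^{k+\beta})$ via Proposition~\ref{prop:classes}~\ref{prop:classes:count}) with $t$ (the total number of distinct abscissas, $(\ell^{2k}-1)/2$), and you omit the cost of computing one representative per Galois class, which the paper bounds by $O(\MM(r\ell^2)\log(r)^2)$; neither omission changes the final bound, but both are worth tracking explicitly.
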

\begin{proof}
By definition of~$k$, we know that~$ℓ^{2k} ∈ O(rℓ^2)$.
  By Proposition~\ref{prop:classes}, there is a $β<h$ such that
  $E[ℓ^k]$ is contained in $E(F_n)$ with $n=k-β$. We thus construct
  the Kummer tower $F₀⊂\cdots⊂F_n$, and we do the precomputations
  required by Lemma~\ref{lemma:frob-ell} at a cost of
  $O(ℓ\MM(ℓ)\log(q))$.

  Bounding $h$ by $k-1$, Step~\ref{alg:ours:horizontal} costs on average
  $O(k\RR(k-\beta) + kℓ^2\MM(ℓ\sqrt{r}) + ℓ\MM(ℓ^2)\log(ℓ)\log(ℓq))$ according to
  Propositions~\ref{th:diagonal} and~\ref{th:horizontal}.
  Using the most pessimistic estimates of
  Section~\ref{sub:towers}, we see that this cost is bounded by
  $O(\MM(rℓ^3)\log(r)\log(ℓ)\log(ℓq))$.

  By Proposition~\ref{prop:classes}~\ref{prop:classes:count}, there
  are at most~$O(k· ℓ^{k+β})$ Galois classes in~$E[ℓ^k]$. In order to
  apply the algorithms of Section~\ref{sec:interpolation}, we need to
  compute a representative for each class. Each representative is
  computed from the basis $(P,Q)$ using point multiplication by two
  scalars $≤ℓ^k$ in the field $F_n$, which costs
  $O(\MM(ℓ^n)\log(ℓ^k))$ operations. We thus have a total cost of
  $O(k\MM(ℓ^{2k})\log(ℓ^k)) ⊂ O(\MM(rℓ^2)\log(r)^2)$ to compute all such representatives.

  Then, using Proposition~\ref{prop:interpol}, where the total degree
  is~$t = (ℓ^{2k}-1)/2∈O(r ℓ^2)$, and the number of interpolation points
  is $s∈O(k·ℓ^{k+β})$, we can compute the polynomials~$T$
  and~$L_{a,b}$ at a cost of~$O(\MM(r ℓ^4)\log(r)\log(ℓ))$.  The cost
  of computing $F_{a,b}$, and identifying the isogeny, is dominated by
  that of computing~$L_{a,b}$~\cite[§3.3]{df10}.  Finally, in general
  approximately ${ℓ^{2k}=O(r ℓ^2)}$~candidate matrices must be tried before
  finding the isogeny.
\end{proof}

\subsection{Overall complexity}
\label{sub:complexity}

By a result of Shparlinski and
Sutherland~\cite[Theorem~1]{shparlinski2014distribution},
for almost all primes $q$ and curves $E/\F_q$, for $L≥\log(q)^ε$ for any $ε>0$,
asymptotically half of the primes $ℓ ≤ L$ are Elkies primes.
Hence, we expect to have enough small Elkies primes to apply our algorithm.
The following theorem states a worst case bound depending on~$r$ and~$q$ alone.

\begin{thm*}
  For almost all primes $q$ and curves $E,E'$ over $\F_q$, it is
  possible to solve the “Explicit Isogeny Problem” in expected time
  $O\bigl(r \MM(r\log(q)^6) \log(r) \loglog(q) \bigr)$.
\end{thm*}

\begin{proof}
  Given a curve $E$, we search for the smallest Elkies prime
  satisfying the conditions of Proposition~\ref{prop:full-complexity}.
  As a special case of~\cite[Theorem~1]{shparlinski2014distribution},
  we can take $L∈O(\log(q))$ such that the product of all
  Elkies primes $ℓ≤L$ exceeds $Ω(√q)$.
On the other hand, we discard those primes~$ℓ ≤ L$ for which
the height~$h$ satisfies~$ℓ^{h} > √r$;
since those discarded primes are divisors of~$√{d_K}$,
their product is at most~$O(√q)$.
This shows that there remains enough “good” Elkies primes
in~$\llbracket 1,L\rrbracket$,
so that in the worst case $ℓ ∈ O(\log(q))$.

  The most expensive steps in Section~\ref{sub:shape-volcano} are
  the computation and the factorization of the modular polynomials
  for all primes up to $ℓ$.
  This is well within~$O(\log(q)^6)$.
  The stated complexity follows then from
  substituting~$ℓ = O(\log(q))$
  in Proposition~\ref{prop:full-complexity}.
\end{proof}

\section{Conclusion and experimental results}
\label{sec:implem}

In the previous sections we have obtained a Las Vegas algorithm with
an interesting asymptotic complexity.  In particular, in the favorable
case where~$ℓ = O(1)$, the running time of the algorithm is
quasi-quadratic in the isogeny degree~$r$ and quasi-linear in~$\log
q$.  Thus we expect it to be practical, and a substantial improvement
over Couveignes' original algorithm, at least when small parameters
$ℓ$ and $h$ can be found quickly. A large $ℓ$ or $h$ adversely affects
performance in the following ways:
\begin{itemize}
\item All modular polynomials up to $Φ_ℓ$ must be computed or
  retrieved from tables.
\item All degrees $(ℓ^{2k}-1)/4≤r<(ℓ^{2k+1}-1)/4$ require essentially
  the same computational effort, thus resulting in a \emph{staircase
    behavior} when $r$ increases.
\item Because we must have $k>h$, all degrees $r$ smaller than
  $(ℓ^{2h+2}-1)/4$ require the same computational effort.
\end{itemize}
For these reasons, it is wisest in practice to set small \emph{a
  priori} bounds on $ℓ$ and $h$, and only run our algorithm when
parameters within these bounds can be found.

To validate our findings, we implemented a simplified version of our main algorithm using
SageMath v7.1~\cite{sage}. In our current implementation, we only
handle the case $\ell=2$ and we work only with curves on the
crater of a $2$-volcano.  We implemented the
construction of Kummer towers described in~\cite{DoSc12}, in the
favorable case where $p = 1 \bmod 4$. Source code and benchmark data
are available in the GitHub project
\url{https://github.com/Hugounenq-Cyril/Two_curves_on_a_volcano/}.

\begin{figure}
\centering
\includegraphics[width=0.49\textwidth]{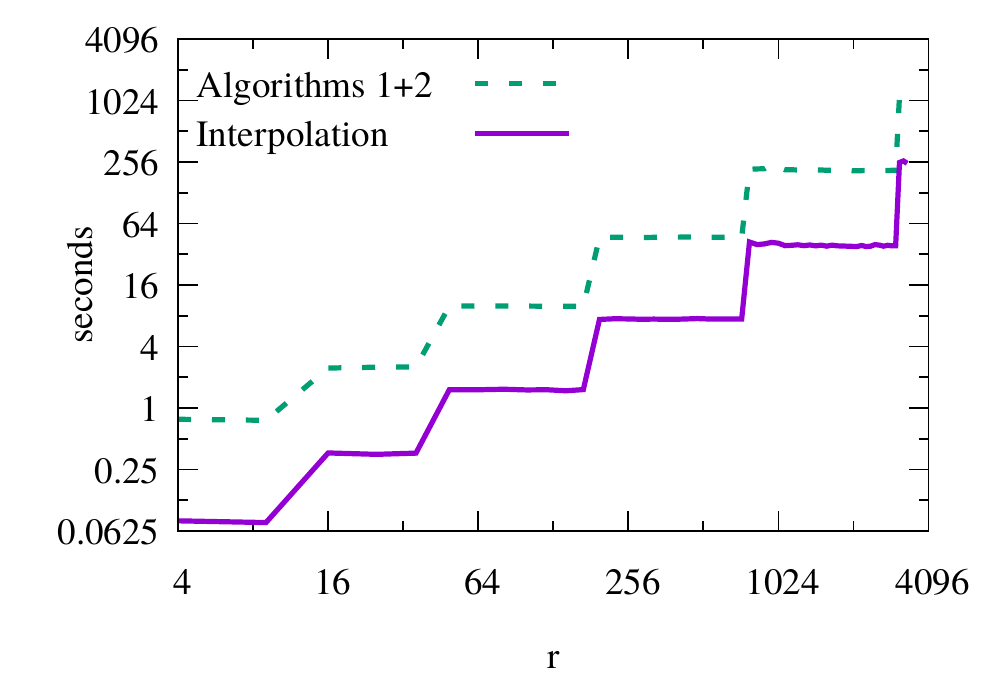}
\hfill
\includegraphics[width=0.49\textwidth]{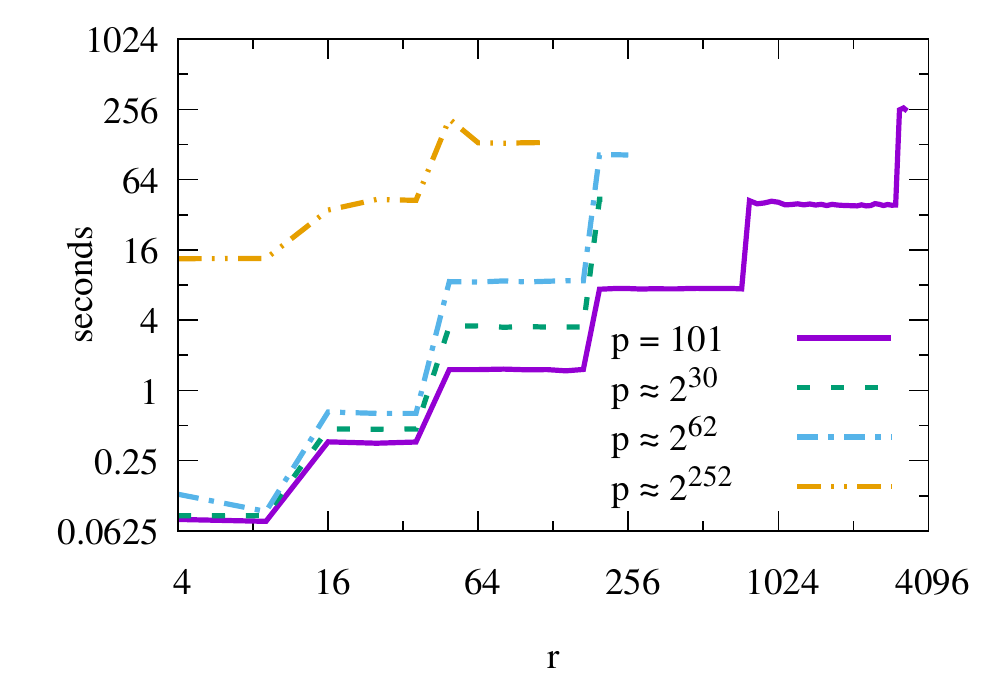}
\caption{Left: comparison of horizontal basis computation and
  interpolation phases, for a fixed curve defined over $\F_{101}$, and
  increasing $r$. Right: Comparison of one interpolation phase for
  $\F_{101}$, $\F_{2^{30}+669}$, $\F_{2^{62}+189}$ and $\F_{2^{252}+421}$, and increasing
  $r$. Plots in logarithmic scale.}
\label{fig:benchs}
\end{figure}

We ran benchmarks on an Intel Xeon E5530 CPU clocked at 2.4GHz. We
fixed a base field $\F_q$ and an elliptic curve $E$ with height $h=3$ and $\beta=2$,
then ran our algorithm to compute the multiplication-by-$r$ isogeny
$E→E$, for $r$ increasing.  The torsion levels involved in the
computations varied from $2^3$ to $2^8$.  Figure~\ref{fig:benchs}
(left) shows the running times for the computation of the horizontal
basis of $E[ℓ^k]$, and for one execution of the interpolation
step. Running times are close to linear in $r$,
as expected. The staircase behavior of our algorithm is apparent from
the plot. Since the interpolation steps must be repeated $\sim r$ times,
we focus on this step to compare the running
time for different base fields. In Figure~\ref{fig:benchs} (right)
we observe that the dependency in $q$, although much better than in
Couveignes' original algorithm, is higher than what the theoretical 
analysis would predict. This may due to low-level implementation details
of SageMath, which, in the current implementation, are beyond our control.

In conclusion, our algorithm shows promise of being of practical
interest within selected parameter ranges. Generalizing it to work
with Atkin primes would considerably enlarge its applicability range;
we hope to develop such a generalization in a future work. On the
practical side, we plan to work on two improvements that seem within
reach. First, the reduction from generic curves to $ℓ$-maximal curves
seems superfluous and unduly expensive: it would be interesting to
generalize the concept of horizontal bases to any curve. Second, a
multi-modular approach interpolating on a torsion group of composite
order is certainly possible, and could improve the running time of our
algorithm by allowing it to work in smaller extension fields.

\begin{acknowledgments}
  We would like to thank the anonymous referees for their careful
  review and their insightful remarks.
\end{acknowledgments}

\bibliographystyle{plain}
\bibliography{refs}

\appendix

\section{Galois classes in~$E[ℓ^k]$}
\label{ap:galois}

We give here the full decomposition of $E[ℓ^k]$ in Galois classes.
This is a more precise form of Proposition~\ref{prop:classes}~(v).
\begin{prop}\label{prop:orbites-l-torsion}
Let~$E$ be an elliptic curve with $ℓ$-maximal endomorphism ring.
Assume $ℓ ≠ 2$, $λ ≡ μ ≡ 1 \pmod{ℓ}$ and let~$α = v_ℓ(λ-1), β=v_ℓ(μ-1)$.
Write~$ν(x, y) = \min (x+y, x+β-1, y+α-1)$
and~$ρ(x, y) = x+y - ν(x, y) = \max (0, x-α+1, y-β+1)$.
The decomposition of the group~$E[ℓ^k]$ in Galois classes is as follows:
\begin{enumerate}
\item for~$i, j = 1, …, k-1$:
$(ℓ-1)^2 · ℓ^{ν(i,j)}$ classes of size~$ℓ^{ρ(i,j)}$;
\item for~$i = 1, …, k-1$:
$(ℓ-1) · ℓ^{\min (i, α-1)}$ classes of size~$ℓ^{\max (0, i-α+1)}$, and
$(ℓ-1) · ℓ^{\min (i, β-1)}$ classes of size~$ℓ^{\max (0, i-β+1)}$;
\item the $ℓ^2$ singleton classes of~$E[ℓ]$.
\end{enumerate}
\end{prop}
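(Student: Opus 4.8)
The plan is to work, as in the proof of Proposition~\ref{prop:classes}, in a diagonal basis $(P,Q)$ of $E[ℓ^k]$ and reduce to the case $d_1=1$, so that the Galois action on $E[ℓ^k]$ is generated by a single automorphism $π$, diagonal with eigenvalues $λ,μ$. Since we assume $λ≡μ≡1\pmod ℓ$, write $λ=1+ℓ^aλ'$, $μ=1+ℓ^bμ'$ with $ℓ∤λ'μ'$, so $α=v_ℓ(λ-1)$, $β=v_ℓ(μ-1)$. For a point $T=uP+vQ$ with $v_ℓ(u)=k-i$ and $v_ℓ(v)=k-j$ (so that $T$ has ``$P$-height'' $i$ and ``$Q$-height'' $j$, $0\le i,j\le k$), I would compute the size of its Galois orbit as the order of $π$ acting on $T$; an easy $ℓ$-adic computation shows $(π^N-1)T=0$ iff $v_ℓ(N)\ge k-α-i$ and $v_ℓ(N)\ge k-β-j$, exactly as in the proof of Proposition~\ref{prop:classes}. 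Hence the orbit size of such a $T$ is $ℓ^{\rho'}$ where $\rho'=\max(0,\,i-(k-α),\,j-(k-β))$\,---\,wait, more precisely the exponent is the ``amount of height above the level where $π$ stops moving the point'', which after the substitution $i\mapsto i$, $j\mapsto j$ reads $\max(0,i-\alpha+1,\dots)$; I will match this carefully to the stated $\rho(i,j)=\max(0,i-α+1,j-β+1)$ by keeping track of the off-by-one coming from the $ℓ=2$-excluded normalization (here $ℓ$ is odd, so $v_ℓ(ℓ^N\cdot\text{unit})=N$ and the relevant threshold is $k-α-i+1$, giving the stated formula).

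Next I would \emph{count}: for fixed heights $(i,j)$ with $1\le i,j\le k-1$, the points $T=uP+vQ$ with $v_ℓ(u)=k-i$, $v_ℓ(v)=k-j$ number $(ℓ-1)ℓ^{i-1}\cdot(ℓ-1)ℓ^{j-1}=(ℓ-1)^2ℓ^{i+j-2}$. Each Galois orbit among these has size $ℓ^{\rho(i,j)}$, and all orbits of points with these exact heights have the same size (the orbit-size computation above depends only on $(i,j)$). Therefore the number of classes with heights $(i,j)$ is $(ℓ-1)^2ℓ^{i+j-2-\rho(i,j)}=(ℓ-1)^2ℓ^{ν(i,j)-?}$; I need $i+j-2-\rho(i,j)$ to equal $ν(i,j)$, and since $\rho(i,j)=i+j-ν(i,j)$ by the stated definition, this gives $i+j-2-\rho(i,j)=ν(i,j)-2$\,---\,so I must recheck the exponent bookkeeping once more and most likely the clean statement is that there are $(ℓ-1)^2ℓ^{ν(i,j)}$ classes once $ν$ is defined as in the proposition (the $-2$ is absorbed because the correct count of $u$ with $v_ℓ(u)=k-i$ in $ℤ/ℓ^kℤ$ is $(ℓ-1)ℓ^{i-1}$, and the product with the $j$-count and division by orbit size, together with the precise threshold, yields exactly $(ℓ-1)^2ℓ^{ν(i,j)}$). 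This is the part where I expect to spend the most care: getting every exponent and every $\pm1$ exactly right so the three displayed formulas come out verbatim.

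For part~(ii) I would redo the same analysis for the ``degenerate'' points, namely $T=uP$ with $v_ℓ(u)=k-i$ (so $v=0$), $1\le i\le k-1$: here $(π^N-1)T=(λ^N-1)uP=0$ iff $v_ℓ(N)\ge k-α-i$, so the orbit size is $ℓ^{\max(0,i-α+1)}$, the number of such points is $(ℓ-1)ℓ^{i-1}$, and hence there are $(ℓ-1)ℓ^{i-1-\max(0,i-α+1)}=(ℓ-1)ℓ^{\min(i-1,\,α-2)}$ classes\,---\,again up to the off-by-one normalization this should become $(ℓ-1)ℓ^{\min(i,α-1)}$ once the index ranges are aligned with the statement; symmetrically for $T=vQ$ with the roles of $α,β$ swapped. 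Part~(iii) is immediate: $E[ℓ]$ consists of the $ℓ^2$ points killed by multiplication-by-$ℓ$, and since $λ≡μ≡1\pmod ℓ$ the Frobenius $π$ acts trivially on $E[ℓ]$, so each of these $ℓ^2$ points is its own Galois class. Finally I would check the sanity identity that the total $\sum(\text{number of classes})\cdot(\text{size})$ over all three families equals $ℓ^{2k}$, which both confirms the computation and shows no point has been double-counted (the families ``both heights $\ge1$'', ``exactly one height $0$'', ``both heights $0$'' partition $E[ℓ^k]$). The only genuine subtlety beyond the exponent bookkeeping is the hypothesis $λ≡μ≡1\pmod ℓ$, which is what guarantees $α,β\ge1$ and makes the thresholds $k-α-i$, $k-β-j$ behave uniformly; without it the decomposition would be coarser, matching the weaker count in Proposition~\ref{prop:classes}~(v).
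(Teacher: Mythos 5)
Your plan — diagonalize, compute each orbit size directly as the order of $\pi$ acting on a point of given coordinate valuations, then count points and divide — is a valid and genuinely more elementary route than the paper's. The paper instead transports the problem to $(\mathbb{Q}_\ell^\times)^2$ via the isomorphism $f(x,y,z)=\ell^x\xi^y\exp(\ell z)$ of~\cite[Th.~II.3.2]{Serre.Arith}, packages the points of given valuations and Teichmüller class into Galois-stable blocks $W(i,j,c,d)$, and reads off both the orbit size and the orbit count from the covolume of a lattice $\Lambda_{i,j}\subset\mathbb{Z}^2$ whose $2\times 2$ minors have valuations $i+j$, $i+\beta-1$, $j+\alpha-1$. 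That lattice/covolume packaging is precisely what makes all the exponents fall out in one stroke with no case analysis; it is the tool you are lacking. What you buy with the orbit--stabilizer approach is that you need nothing beyond the identity $v_\ell(\lambda^N-1)=\alpha+v_\ell(N)$ already used in Proposition~\ref{prop:classes}, but what you pay for it is exactly the exponent bookkeeping you flag as ``the part where I expect to spend the most care'' — and as written, that bookkeeping is not yet consistent. Under your own convention $v_\ell(u)=k-i$, the orbit condition is $v_\ell(N)\geq i-\alpha$, not $v_\ell(N)\geq k-\alpha-i$ as you wrote, which gives orbit size $\ell^{\max(0,\,i-\alpha,\,j-\beta)}$; to reach the stated $\rho(i,j)=\max(0,i-\alpha+1,j-\beta+1)$ you must reindex to the paper's convention $v_\ell(u)=k-1-i$ (so your $i$ is the proposition's $i+1$). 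Once you do that, the orbit count $(\ell-1)^2\ell^{i+j-2}/\ell^{\rho'}$ becomes $(\ell-1)^2\ell^{\nu(i,j)}$ cleanly, since $\nu+\rho=i+j$ by definition; the ``$-2$'' you worry about is exactly absorbed by the shift, not by any subtler cancellation. The same shift fixes the axis case: you wrote $(\ell-1)\ell^{\min(i-1,\alpha-2)}$, but with the correct orbit size $\ell^{\max(0,i-\alpha)}$ you get $(\ell-1)\ell^{\min(i-1,\alpha-1)}$, which is the stated $(\ell-1)\ell^{\min(i',\alpha-1)}$ after $i\mapsto i'+1$. So the idea is sound and truly alternative, but the proof is still a sketch: nail the convention to $v_\ell(u)=k-1-i$ from the start, recompute the threshold once, and the three displayed formulas then come out verbatim rather than ``up to off-by-one''.
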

\begin{proof}
Fix a basis~$(P, Q)$ of~$E[ℓ^k]$ such that~$π(P)=λP$, $π(Q)=μQ$.
Studying the Galois orbits of~$E[ℓ^k]$
means studying the map~$ℤ_ℓ^2 → ℤ_ℓ^2, (x, y) ↦ (λ x, μ y)$.
In other words, the orbits correspond to elements of~$ℤ_ℓ^2$
modulo the \emph{multiplicative} subgroup generated by~$(λ, μ)$.
An easy way to describe this
is to consider a \emph{multiplicative lattice} in~$(ℚ_ℓ^×)^2$.

Let~$ξ$ be a primitive $(ℓ-1)$-th root of unity in~$ℤ_ℓ$.
Then by~\cite[Théorème II.3.2]{Serre.Arith},
the map~$f(x, y, z) = ℓ^x· ξ^y· \exp (ℓ z)$
is a group isomorphism between~$ℤ × (ℤ/(ℓ-1) ℤ) × ℤ_ℓ$ and~$ℚ_ℓ^{×}$.
For~$i ∈ \bcro{0,k-1}$ and~$c ∈ ℤ/(ℓ-1)ℤ$,
let~$V(i,c)$ be the image in $ℤ/ℓ^k ℤ$ of the map~$f(k-1-i,c,–)$:
then the multiplicative structure of $V(i, c)$
is that of a principal homogeneous space under~$ℤ/ℓ^i ℤ$.
We also define~$W(i,j,c,d) = V(i, c) · P \,+\, V(j, d) · Q ⊂ E[ℓ^k]$.

Since~$λ ≡ 1 \pmod{ℓ}$, we may write~$λ = f(0,0,u\, ℓ^{α-1})$
and~$μ = f(0,0, v\, ℓ^{β-1})$
for some~$u, v ∈ ℤ_ℓ^{×}$.
This implies that the set~$W(i,j,c,d)$ is stable under Galois.
Moreover, the orbits of~$W(i,j,c,d)$ correspond bijectively to
points of a fundamental domain of the lattice~$Λ_{i,j}$ generated by
the columns of~$\smat{ℓ^i & 0 & u ℓ^{α-1}\\ 0 & ℓ^j & v ℓ^{β-1}}$,
whereas the size of each orbit is~$[(ℤ/ℓ^i ℤ)×(ℤ/ℓ^j ℤ)\::\: Λ_{i,j}]$.
By using elementary column manipulations,
we find that the covolume of~$Λ_{i,j}$ is~$ℓ^{ν(i,j)}$,
hence the point~(i) of the proposition.
(The case~$i = j = 0$ yields singleton classes in~$E[ℓ]$).

The union of all the sets~$W(j,i,c,d)$
is exactly the set of all~$x P + y Q$ for~$x, y ≠ 0$.
We obtain the classes of~(ii) by considering
the sets~$V(i, c) · P$ and~$V(j,d) · Q$.
\end{proof}

We now state the equivalent proposition when~$ℓ = 2$.
The proof is much the same as in the odd case.

\begin{prop}\label{prop:orbites-2-torsion}
Let~$E$ be an elliptic curve with $2$-maximal endomorphism ring.
Assume $λ ≡ μ ≡ 1 \pmod{4}$ and let~$α = v_2(λ-1), β=v_2(μ-1)$.
Write~$ν_2(x, y) = \min (x+y, x+β-2, y+α-2)$
and~$ρ_2(x, y) = x+y - ν_2(x, y) = \max (0, x-α+2, y-β+2)$.
The decomposition of the group~$E[2^k]$ in Galois classes is as follows:
\begin{enumerate}
\item for~$i, j = 1, …, k-2$:
$4 · 2^{ν_2(i,j)}$ classes of size~$2^{ρ_2(i,j)}$;
\item for~$i = 1, …, k-2$:
$4 · 2^{\min (i, α-2)}$ classes of size~$2^{\max (0, i-α+2)}$, and
$4 · 2^{\min (i, β-2)}$ classes of size~$2^{\max (0, i-β+2)}$.
\item the 16 singleton classes of~$E[4]$.
\end{enumerate}
\end{prop}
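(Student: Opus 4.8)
The plan is to run the proof of Proposition~\ref{prop:orbites-l-torsion} essentially verbatim, substituting for Serre's parametrisation of $ℚ_ℓ^×$ the one appropriate to $ℓ = 2$ and carrying along the index shifts this substitution forces. As there, fix a diagonal basis $(P, Q)$ of $E[2^k]$ with $π(P) = λ P$ and $π(Q) = μ Q$, and identify the Galois conjugacy classes of $E[2^k]$ with the orbits of $(ℤ/2^k ℤ)^2$ — via $(x, y) ↦ x P + y Q$ — under the multiplicative action of the cyclic subgroup generated by $(λ, μ)$. The hypothesis $λ ≡ μ ≡ 1 \pmod 4$ is exactly what forces $π$ to act trivially on $E[4]$, which accounts for the $16$ singleton classes of part~(iii).

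The essential point is the replacement of the parametrisation. The $2$-adic exponential converges on $4ℤ_2$ and maps it isomorphically onto $1 + 4ℤ_2$, and $ℤ_2^× = \{\pm 1\} × (1 + 4ℤ_2)$; hence $f(x, y, z) = 2^x · (-1)^y · \exp(4z)$ is a topological group isomorphism from $ℤ × (ℤ/2ℤ) × ℤ_2$ onto $ℚ_2^×$, playing the role of $f(x, y, z) = ℓ^x ξ^y \exp(ℓ z)$ from the odd case. Two structural differences explain every numerical discrepancy between Propositions~\ref{prop:orbites-l-torsion} and~\ref{prop:orbites-2-torsion}: first, the torsion subgroup of $ℚ_2^×$ is $\{\pm 1\}$, of order $2$, so it contributes a factor $4 = 2^2$ where the odd case produced $(ℓ - 1)^2$; second, the exponential is dilated by $4$ rather than by $ℓ$, and since $λ ≡ 1 \pmod 4$ gives $v_2(\log λ) = v_2(λ - 1) = α ≥ 2$, we may write $λ = \exp(4 · u 2^{α - 2}) = f(0, 0, u 2^{α - 2})$ and likewise $μ = f(0, 0, v 2^{β - 2})$ for units $u, v ∈ ℤ_2^×$. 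This is why every exponent $α - 1$ (resp.\ $β - 1$) of the odd statement becomes $α - 2$ (resp.\ $β - 2$), why the ranges $1, \dots, k - 1$ become $1, \dots, k - 2$, and why the group on which Galois acts trivially is $E[4]$ rather than $E[ℓ]$.

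With $f$ fixed, one repeats the construction. For $c ∈ ℤ/2ℤ$ and $i$ in the relevant range, let $V(i, c)$ be the image in $ℤ/2^k ℤ$ of the map $z ↦ f(\,\cdot\,, c, z)$, a principal homogeneous space under $ℤ/2^i ℤ$, and put $W(i, j, c, d) = V(i, c) · P + V(j, d) · Q ⊆ E[2^k]$. Since $λ, μ ∈ 1 + 4ℤ_2$, these sets are Galois-stable; the orbits of $W(i, j, c, d)$ correspond bijectively to a fundamental domain of the multiplicative lattice $Λ_{i, j}$ generated by the columns of $\smat{2^i & 0 & u 2^{α - 2} \\ 0 & 2^j & v 2^{β - 2}}$, and each orbit has $[(ℤ/2^i ℤ) × (ℤ/2^j ℤ) : Λ_{i, j}]$ elements. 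Elementary column operations show that $Λ_{i, j}$ has covolume $2^{ν_2(i, j)}$ and index $2^{ρ_2(i, j)}$ — the three terms of $ν_2(x, y) = \min(x + y,\, x + β - 2,\, y + α - 2)$ reflecting the relative sizes of $i$ and $α - 2$, and of $j$ and $β - 2$ — and multiplying by the four choices of $(c, d)$ gives part~(i). Part~(ii) is the same computation applied to the one-coordinate families $V(i, c) · P$ and $V(j, d) · Q$, i.e.\ to the points whose remaining coordinate lies in $E[4]$, and part~(iii) collects the $16$ points of $E[4]$.

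The only delicate step — the one place where "much the same as the odd case" needs care — is the $2$-adic bookkeeping. Because the exponential relates $4ℤ_2$ to $1 + 4ℤ_2$ rather than $2ℤ_2$ to $1 + 2ℤ_2$, and because $ℤ_2^×$ is not procyclic, one extra power of $2$ must be propagated coherently through the definition of $V(i, c)$, the ranges of $i$ and $j$, the exponents $α - 2$ and $β - 2$, and the identification of the bottom layer with $E[4]$; one must also keep in mind that here $α = v_2(λ - 1)$ coincides with the quantity $v_2(λ^2 - 1) - 1$ of Proposition~\ref{prop:classes} precisely because of the standing hypothesis $λ ≡ 1 \pmod 4$. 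Once $f$ and the sets $V(i, c)$ are pinned down consistently, the covolume computation for $Λ_{i, j}$ and the ensuing case analysis for $ν_2$ and $ρ_2$ are purely mechanical.
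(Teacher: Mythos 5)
Your proposal is correct and follows the same route as the paper, which itself only remarks that the proof is ``much the same as in the odd case.'' You correctly isolate the two structural differences that drive all the numerical shifts — the torsion subgroup of $\mathbb{Q}_2^\times$ is $\{\pm 1\}$ of order $2$ (giving the factor $4$ in place of $(\ell-1)^2$), and $\exp$ is an isomorphism $4\mathbb{Z}_2 \to 1 + 4\mathbb{Z}_2$ (shifting $\alpha-1,\beta-1$ to $\alpha-2,\beta-2$, the ranges to $1,\dots,k-2$, and the bottom layer to $E[4]$) — and propagate them through the lattice covolume computation exactly as intended.
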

Note that if $λ$ or~$μ ≡ -1 \pmod{4}$ then
by replacing the base field by a quadratic extension,
we can always ensure that the condition $λ ≡ μ ≡ 1 \pmod{4}$ is
satisfied.
%
%
\affiliationone{Luca De Feo\\
LMV -- UVSQ\\
45 avenue des États-Unis\\
78035 Versailles\\
France\\
ORCiD: \href{http://orcid.org/0000-0002-9321-0773}{0000-0002-9321-0773}}
\affiliationtwo{Jérôme Plût\\
ANSSI\\
51, boulevard de La Tour-Maubourg\\
75007 Paris\\
France}
\affiliationthree{Cyril Hugounenq\\
LMV -- UVSQ\\
45 avenue des États-Unis\\
78035 Versailles\\
France}
\affiliationfour{Éric Schost\\
Cheriton School of Computer Science\\
University of Waterloo\\
Canada}
\end{document}